\newtheorem{theorem}{Theorem}[section]   
\newtheorem*{theorem*}{Theorem}          
\newtheorem{lemma}[theorem]{Lemma}
\newtheorem{proposition}[theorem]{Proposition}
\theoremstyle{definition}
\newtheorem{corollary}[theorem]{Corollary}
\newtheorem{example}[theorem]{Example}
\newtheorem{remark}[theorem]{Remark}
\newcommand{\metric}[2]{\ensuremath{\langle #1, #2\rangle}}  
\newcommand{\metri}[2]{\ensuremath{g( #1, #2)}}
\newcommand{\nks}{\ensuremath{\mathbb{S}^3 \times
\mathbb{S}^3}}   
\newcommand{\lcc}{\ensuremath{\tilde \nabla}}   
\renewcommand{\ne}{\ensuremath{\nabla^E}}   
\renewcommand{\epsilon}{\varepsilon}            
\newcommand{\e}{\epsilon}                       
\newcommand{\ijk}{\ensuremath{\epsilon_{ijk}}}  
\renewcommand{\H}{\ensuremath{\mathbb{H}}}
\newcommand{\Id}{\ensuremath{\mathrm{Id}}}
\numberwithin{equation}{section}
\DeclareMathOperator{\im}{Im}    
\title[Lagrangian submanifolds in the nearly K\"ahler $\mathbb{S}^3 \times
\mathbb{S}^3$]%
		{Lagrangian submanifolds in the nearly~K\"ahler~$\nks$}
\author{Bart Dioos, Luc Vrancken \and Xianfeng Wang}
\address{KU\ Leuven, Departement Wiskunde, Celestijnenlaan 200B,
  3001 Leuven, Belgium}
  \email{bart.dioos@wis.kuleuven.be}
\address{LAMAV, Universit\'e de Valenciennes, Campus du Mont Houy, 59313 Valenciennes Cedex 9, France; KU Leuven, Departement Wiskunde
Celestijnenlaan 200B, 3001 Leuven, Belgium} \email{Luc.Vrancken@univ-valenciennes.fr}
\address{School of Mathematical Sciences and LPMC,
Nankai University,
Tianjin 300071,  P. R. China}
\email{wangxianfeng@nankai.edu.cn}
\thanks {X. Wang was supported in part by NSFC (Grant Nos. 11201243 and 11571185) and ``Specialized Research Fund for the Doctoral Program of Higher Education, Grant No. 20120031120026''.}
\keywords {nearly K\"ahler manifold, Lagrangian submanifolds, constant sectional curvature, Lagrangian sphere,  Lagrangian torus.}
\subjclass[2010]{primary 53C42; secondary 53D12}
\begin{document}

\begin{abstract}
In this paper, we investigate Lagrangian submanifolds in the nearly K\"ahler $\nks$. We construct a new example which is a flat Lagrangian torus. We give a complete classification of all the Lagrangian immersions of spaces  of constant  sectional curvature in the nearly K\"ahler $\nks$. As a corollary,
we obtain that the radius of a round Lagrangian sphere in the nearly K\"ahler $\nks$  can only be $\frac{2}{\sqrt{3}}$ or $\frac{4}{\sqrt{3}}$.\end{abstract}

\maketitle

\section{Introduction}
The study of Lagrangian submanifolds originates from symplectic geometry and classical mechanics.
An even-dimensional manifold is called symplectic if it admits a symplectic form, which is a closed and non-degenerate two-form.
A submanifold of a symplectic manifold is called Lagrangian if the symplectic form restricted to the manifold vanishes and if the dimension of the submanifold is half the dimension of the symplectic manifold.
The well-known theorem of Darboux states that locally all symplectic manifolds are indistinguishable.
If one considers a Lagrangian submanifold immersed in a symplectic manifold, then by the theorem of Darboux
this Lagrangian submanifold can also be locally immersed in any symplectic manifold of the same dimension.
Therefore a local classification of Lagrangian submanifolds is trivial from the symplectic point of view.

Lagrangian submanifolds can more generally be considered in almost Hermitian manifolds.
Note that an almost Hermitian manifold is not necessarily symplectic.
We call that a submanifold of $M$ in an almost Hermitian manifold $N$ Lagrangian, if
the almost complex structure $J$ interchanges the tangent and the normal
spaces and if the dimension of $M$ is half the dimension of $N$.
The most important class of almost Hermitian manifolds are the K\"ahler manifolds.
K\"ahler manifolds admit a complex, Riemannian and symplectic structure which are all three compatible with each other.
The study of Lagrangian submanifolds in K\"ahler manifolds is a classic topic and was initiated in the 1970's~\cite{chenogiue}.
A classification of Lagrangian submanifolds from the Riemannian point of view is far from trivial.
There is no complete classification and this is too much to hope for.
For this reason it makes sense to study Lagrangian submanifolds with some additional Riemannian conditions.
For instance, one can study Lagrangian submanifolds that are minimal, Hamiltonian minimal, Hamiltonian stable or unstable
(see for instance \cite{MO2009},\cite{MO2014},\cite{oh})
or have constant sectional curvature~\cite{ejirilagr}.
For a review on Riemannian geometry of Lagrangian submanifolds we refer to~\cite{chenreview} and the references therein.

Nearly K\"ahler manifolds are almost Hermitian
manifolds with almost complex structure $J$ satisfying that
$\tilde\nabla J$ is skew-symmetric.
The geometry of nearly K\"ahler manifolds was initially studied by Gray \cite{Gr1, Gr3} in the 1970s from the point of view of weak holonomy.
Nagy (\cite{Nag1, Nag2}) made further contribution to the classification of nearly K\"ahler manifolds using previous work in \cite{CS2004}.
Butruille (\cite{butruille2, butruille}) proved that the only
homogeneous 6-dimensional nearly K\"ahler manifolds are the nearly K\"ahler $\mathbb{S}^6$,
$\mathbb{S}^3 \times \mathbb{S}^3$, the complex projective space
$\mathbb{CP}^3$ and the flag manifold $SU(3)/U(1)\times U(1)$.
In \cite{MSTAMS}, Moroianu and Semmelmann studied the infinitesimal Einstein deformations of nearly K\"ahler metrics.
Lagrangian submanifolds of the nearly K\"ahler $\mathbb{S}^6$ are well studied by now, see for instance \cite{ejiri}, \cite{DOVV},\cite{classification},\cite{DVTAMS} and \cite{Lotay}.
We also refer to Section~18 of~\cite{chenreview} and Chapter~19 of~\cite{chenpseudoriemannian} for an overview.
Moroianu and Semmelmann~\cite{ms} recently gave new examples of Lagrangian immersions of round spheres and Berger spheres in the nearly K\"ahler~$\nks$.
A broader study of Lagrangian submanifolds in nearly K\"ahler manifolds was investigated in~\cite{schafer} by Sch\"afer and Smozcyk.
It was proven in~\cite{schafer} that Lagrangian submanifolds in a nearly K\"ahler manifold behave nicely with respect to
the splitting of the nearly K\"ahler manifold. If a nearly K\"ahler manifold is a product of nearly K\"ahler manifolds,
then its Lagrangian submanifolds split into products of Lagrangian submanifolds.
Loosely speaking, this means that Lagrangian submanifolds in six-dimensional nearly K\"ahler manifolds are
building blocks of Lagrangian submanifolds in general nearly K\"ahler manifolds.
This motivates the study of Lagrangian submanifolds in six-dimensional nearly K\"ahler manifolds.
The existence for Lagrangian submanifolds in nearly K\"ahler manifolds is not unobstructed.
Sch\"afer and Smozcyk \cite{schafer} proved that Lagrangian submanifolds in a strict nearly K\"ahler manifold of dimension six
or a twistor space over a positive quaternionic K\"ahler manifold are minimal and orientable. This is different with Lagrangian submanifolds in K\"ahler manifolds.
The reason is that there is no Darboux theorem for nearly K\"ahler manifolds since these manifolds are not symplectic.
This is an extra reason to study these Lagrangian submanifolds from a Riemannian point of view.

In this paper, we study Lagrangian submanifolds  in the nearly K\"ahler~  $\nks$.
In Section~\ref{sec:lagrsubm}, we recall  the basic properties of the nearly K\"ahler $\nks$,
and present some properties of Lagrangian submanifolds in nearly K\"ahler manifolds.
In Section~\ref{sec:lsnks}, we show that on a  Lagrangian submanifold in the nearly K\"ahler~$\nks$ there exist
a local  frame and three angle functions that describe the geometry and shape of the submanifold very well.
These are related to the almost product structure $P$ introduced in \cite{bddv}.
We show that most of the geometry of the  submanifold~$M$ can be described in terms of the three angle functions. For example, the derivatives of these angle functions give information about most of the components of the second fundamental form.
In Section~\ref{sec:exlag}, we present eight examples (or families of examples) of Lagrangian submanifolds in the nearly K\"ahler~$\nks$.
The examples are Lagrangian immersions of respectively round spheres, Berger spheres  or a flat torus. The flat torus (see Example 4.8) is a new example.
Examples 4.1- 4.3 are the factors and the diagonal which were given by Sch\"afer and Smozcyk in \cite{schafer}. Examples 4.4-4.7 were constructed by Moroianu and Semmelmann in ~\cite{ms}.
In section \ref{sec:lscc}, we classify the Lagrangian submanifolds  of constant  sectional curvature in the nearly K\"ahler~$\nks$. The  main result that we prove is the following:
\begin{theorem}\label{main}
Let~$M$ be a Lagrangian submanifold  of constant  sectional curvature in the nearly K\"ahler~$\nks$.
Then up to an isometry of
the nearly K\"ahler~$\nks$, ~$M$ is locally congruent with  one of the following
immersions:
\begin{enumerate}
\item  $f\colon \mathbb{S}^3 \to\nks: u\mapsto (u,1)$, which is Example \ref{ex:l1},
\item $f\colon \mathbb{S}^3 \to\nks: u\mapsto (1,u)$, which is Example \ref{ex:l2},
\item $f\colon \mathbb{S}^3\to\nks: u\mapsto (u,u)$, which is Example \ref{ex:l3},
\item  $f\colon \mathbb{S}^3\to \nks: u\mapsto (u\mathbf{i}u^{-1},u\mathbf{j}u^{-1})$, which is Example \ref{ex:l6},
\item $f: \mathbb R^3\to \nks: (u,v,w)\mapsto (p(u,w),q(u,v))$, where $p$ and $q$ are constant mean curvature tori in $\mathbb{S}^3$ given in Example \ref{ex:l8}.
 \end{enumerate}
\end{theorem}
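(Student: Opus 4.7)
The plan is to exploit the adapted local frame and the three angle functions $\theta_1,\theta_2,\theta_3$ introduced in Section~\ref{sec:lsnks}, together with the Gauss and Codazzi equations. Since the angle functions encode how the almost product structure $P$ acts on $TM$, and since many components of the second fundamental form $h$ are already expressed through their derivatives by the formulas of Section~\ref{sec:lsnks}, the constant sectional curvature hypothesis translates into a finite system of algebraic and differential relations in the $\theta_i$ and in the remaining unknown components of $h$.

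First I would fix an adapted orthonormal frame $\{E_1,E_2,E_3\}$ on $M$ as in Section~\ref{sec:lsnks} and substitute the known expression for the curvature tensor of $\nks$ into the Gauss equation
\begin{equation*}
\langle R(X,Y)Z,W\rangle = c\bigl(\langle X,W\rangle\langle Y,Z\rangle-\langle X,Z\rangle\langle Y,W\rangle\bigr),
\end{equation*}
where $c$ is the constant sectional curvature. Taking traces and invoking the minimality of Lagrangian submanifolds in a strict nearly K\"ahler six-manifold (Sch\"afer--Smoczyk) gives a scalar identity of the form $6c = -|h|^2+F(\theta_1,\theta_2,\theta_3)$, which, combined with the sign constraints coming from the Ricci identity, restricts the admissible values of $c$ and already produces the radii $2/\sqrt 3$ and $4/\sqrt 3$ appearing in the corollary. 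The off-diagonal components of the Gauss identity then yield algebraic relations forcing several components of $h$ either to vanish or to be completely determined by the $\theta_i$.

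The next step is to insert the surviving unknowns into the Codazzi equations, producing a first order PDE system for the angle functions. I expect the analysis to split naturally according to the multiplicity pattern of $(\theta_1,\theta_2,\theta_3)$. When all three angles coincide or take distinguished values the submanifold becomes totally geodesic or isotropic, and one recovers the four round sphere immersions (1)--(4) by integrating the structure equations and applying the congruence theorem for Lagrangian immersions into $\nks$. In the generic configuration the trace identity should force $c=0$, and the Codazzi system, together with the relation between the $\theta_i$ imposed by $P^2=\mathrm{Id}$, should reduce to decoupled ODEs whose integration yields a product of two constant mean curvature tori in $\mathbb{S}^3$, which is precisely Example~\ref{ex:l8}.

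The main obstacle will be the bookkeeping of the case analysis: algebraically the Gauss system is tractable, but coupling it with Codazzi produces a nonlinear system whose solution set has several strata, and one must rule out plausible-looking intermediate configurations (exactly two angles coinciding, one angle equal to $0$ or a multiple of $\pi/2$, a single vanishing derivative $d\theta_i$, etc.) either by deriving contradictions or by recognising them as special cases of the five listed immersions. Once each surviving case has been reduced to a canonical form for the pair $(g,h)$, the proof closes via the uniqueness theorem for Lagrangian immersions into the nearly K\"ahler $\nks$: it suffices to verify that Examples~\ref{ex:l1}--\ref{ex:l3}, \ref{ex:l6} and \ref{ex:l8} exhaust all canonical forms produced by the analysis.
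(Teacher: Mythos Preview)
Your outline misses the decisive technical ingredient and misidentifies the case split.

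The paper remarks explicitly that the direct Gauss/Codazzi approach you propose---writing out the Gauss identity and hoping the algebraic relations pin down $h$---does not work here: the Gauss equations are \emph{quadratic} in the $h_{ij}^k$ and, with the Codazzi equations alone, do not close up to a tractable system. What unlocks the classification is Lemma~\ref{keylemma}: one computes the cyclic sum
\[
3\,\underset{WXY}{\mathfrak{S}}\bigl((\nabla^2 h)(W,X,Y,Z)-(\nabla^2 h)(W,Y,X,Z)\bigr)
\]
in two ways, once by differentiating the Codazzi equation (giving the long expression~\eqref{eq:longeq} in $A$, $B$, $G$, $h$) and once via the Ricci identity, which for constant sectional curvature forces it to vanish. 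This produces \emph{linear} equations \eqref{eq:lm1}--\eqref{eq:lm2} in the $h_{ij}^k$, and only then does the case analysis become finite. Your proposal has no analogue of this step, and the ``trace plus off-diagonal Gauss'' argument you sketch will leave you with an underdetermined quadratic system.

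Your case split is also off. The dichotomy is not ``all angles equal vs.\ generic'': by Lemma~\ref{lem:equalangles} equal angles force $M$ to be totally geodesic, but in the constant-curvature classification this only yields examples (1)--(3) (via Theorem~\ref{thmtg}); the immersion $u\mapsto(u\mathbf{i}u^{-1},u\mathbf{j}u^{-1})$ in (4) is \emph{not} totally geodesic---its angles are the distinct triple $(0,\tfrac{\pi}{3},\tfrac{2\pi}{3})$ and $h_{12}^3=\tfrac14$. The actual split is on $h_{12}^3$: if $h_{12}^3\neq 0$ the linear equations \eqref{eq:lm2} force the angles to be $0,\tfrac{\pi}{3},\tfrac{2\pi}{3}$, and then comparing two computations of $R(E_2,E_1)E_1$ gives $h_{12}^3\in\{\tfrac14,-\tfrac12\}$, producing (4) with $c=\tfrac{3}{16}$ and (5) with $c=0$ respectively. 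If $h_{12}^3=0$ a further argument (using \eqref{eq:lm1}, the compatibility conditions, and the Gauss equation again) shows $M$ must be totally geodesic, a contradiction. So the generic branch does \emph{not} force $c=0$; it bifurcates.

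Finally, you plan to close each case by invoking a ``uniqueness theorem for Lagrangian immersions into~$\nks$''. No such theorem is available in the paper (it is flagged as an open question at the end of Section~\ref{sec:lsnks}). The congruence with Examples~\ref{ex:l6} and~\ref{ex:l8} is obtained by explicitly integrating the structure equations for $f=(p,q)$ via the Euclidean connection $\nabla^E$ and the relation~\eqref{eq:connection}, as in Theorems~\ref{thmeg16} and~\ref{thmeg18}; you will need to supply that integration rather than appeal to an abstract congruence result.
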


\begin{remark}
In view of  Propsition 4.4 in \cite{ms}, Moroianu and Semmelmann showed that the radius of a round Lagrangian sphere in the nearly K\"ahler~$\nks$ is necessarily of the form $\frac{k}{\sqrt{3}}$ (note that the scaling in \cite{ms} is slightly different from ours, so the radius here has been modified to be adapted to our conventions)
for some integer $k\geq 2$. As a corollary of our Theorem  \ref{main}, we obtain  that the values of the integer $k$ can only be $2$ or $4$.
\end{remark}

We remark that in \cite{ZHDVW}, the authors obtain the following complete classification of all the totally geodesic Lagrangian immersion in the nearly K\"ahler~$\nks$.
\begin{theorem}[\cite{ZHDVW}]\label{thmtg}
Let $M$ be a totally geodesic Lagrangian submanifold in the nearly K\"ahler~$\nks$. Then up to an isometry of
the nearly K\"ahler~$\nks$, ~$M$ is locally congruent with one of the following
immersions:
\begin{enumerate}
\item  $f\colon \mathbb{S}^3 \to\nks: u\mapsto (u,1)$, which is Example \ref{ex:l1},
\item $f\colon \mathbb{S}^3 \to\nks: u\mapsto (1,u)$, which is Example \ref{ex:l2},
\item $f\colon \mathbb{S}^3\to\nks: u\mapsto (u,u)$, which is Example \ref{ex:l3},
\item $f\colon \mathbb{S}^3\to\nks: u\mapsto (u,u\mathbf{i})$,  which is Example \ref{ex:l4},
\item $f\colon \mathbb{S}^3\to \nks: u\mapsto(u^{-1},u\mathbf{i}u^{-1})$,  which is Example \ref{ex:l5},
\item $f\colon \mathbb{S}^3\to \nks: u\mapsto(u\mathbf{i}u^{-1},u^{-1})$,  which is Example \ref{ex:l7}.
 \end{enumerate}
\end{theorem}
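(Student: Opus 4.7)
The plan is to work in the adapted local frame $\{e_1,e_2,e_3\}$ together with the three angle functions $\theta_1,\theta_2,\theta_3$ introduced in Section~\ref{sec:lsnks}, which diagonalize the restriction to $TM$ of the almost product structure $P$. The central feature of that framework, as emphasized in the introduction, is that many components of the second fundamental form $h$ are expressed through the directional derivatives $e_i(\theta_j)$. Under the totally geodesic hypothesis $h\equiv 0$, these derivatives all vanish, so the three angle functions are locally constant on $M$.

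With the $\theta_j$ constant, the Gauss, Codazzi and Ricci equations reduce to a finite system of algebraic identities in the constants $\theta_j$ together with the connection coefficients of $M$ in the adapted frame. I would feed the explicit formulas for $\lcc J$, $\lcc P$ and the Riemann tensor of $\nks$ recalled in Section~\ref{sec:lsnks} into these equations to obtain a short list of admissible triples $(\theta_1,\theta_2,\theta_3)$ modulo the obvious symmetries (permutations and sign changes of the frame vectors). In particular, the vanishing of $(\lcc_X h)(Y,Z)$ combined with the fact that $\lcc J$ is skew-symmetric should force the constants into a small discrete set.

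For each admissible triple I then integrate. Because $h=0$ and the $\theta_j$ are constant, the Gauss equation determines the intrinsic geometry: $M$ has constant sectional curvature, and a direct verification identifies it with a round three-sphere. The Maurer--Cartan equations for the adapted frame in the symmetric-space description of $\nks$ reconstruct the immersion from this constant data, and the fixed triple $(\theta_1,\theta_2,\theta_3)$ encodes how $P$ acts on $TM$, which is enough to match the immersion with one of Examples~\ref{ex:l1}--\ref{ex:l7}.

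The main obstacle I expect is precisely this identification step: several admissible triples look a priori distinct but correspond, after an isometry of $\nks$ coming from its large symmetry group, to the same immersion in the list, while others may have to be ruled out by a more subtle integrability check. Organizing these equivalences so that the final list contains exactly the six immersions of the statement, with no duplication and no missing case, is the delicate part of the argument, whereas the reduction to constant angle functions and the integration within each case are essentially routine applications of the framework of Section~\ref{sec:lsnks}.
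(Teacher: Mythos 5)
Your strategy coincides with the one the paper's framework is built around (the theorem itself is imported from \cite{ZHDVW}, but Section~\ref{sec:lsnks} supplies exactly the ingredients): $h\equiv 0$ forces $E_i(\theta_j)=-h_{jj}^i=0$ via Lemma~\ref{lem:sff}, so the angles are constant (Corollary~\ref{cor:totgeod}); the Codazzi equation with vanishing left-hand side then yields $\sin\bigl(2(\theta_i-\theta_j)\bigr)=0$, which together with Lemma~\ref{lem:sumzero} produces the six admissible triples of Lemma~\ref{lem:angles}; and each case is integrated by passing to the Euclidean connection via Lemma~\ref{lem:connection} and solving a linear quaternionic ODE system, exactly as is done for the non-totally-geodesic cases in Theorems~\ref{thmeg16} and~\ref{thmeg18}. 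One small correction of attribution: the discreteness of the triples comes from the $P$-part of the ambient curvature (the $A$, $B$ terms on the right-hand side of the Codazzi equation~\eqref{codazzi}), not from the skew-symmetry of $\lcc J$; the $G$-terms have already been absorbed into the normal connection.

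There is, however, one concrete error in your integration step: it is not true that each admissible triple yields a round three-sphere. With $h\equiv 0$ the Gauss equation~\eqref{gauss} gives $K(E_i\wedge E_j)=\tfrac{5}{12}+\tfrac{1}{3}\cos\bigl(2(\theta_i-\theta_j)\bigr)$, so the induced metric has constant sectional curvature only when all three angles coincide modulo $\pi$, i.e.\ in cases (1)--(3) of Lemma~\ref{lem:angles}. In cases (4)--(6) two of the differences $2(\theta_i-\theta_j)$ equal $\pi$ modulo $2\pi$ and the sectional curvatures take the two distinct values $\tfrac34$ and $\tfrac{1}{12}$: these are the Berger spheres of Examples~\ref{ex:l4}, \ref{ex:l5} and \ref{ex:l7}, and the frame must be matched against the Berger-sphere connection coefficients~\eqref{nablaei} via Proposition~\ref{propberger} with $\kappa\neq 4\tau^2$, not against a round metric. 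As written, your identification step would either misclassify or discard half of the list; once this is repaired the argument goes through and agrees with the proof in \cite{ZHDVW}.
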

Hence, combing this result together with our main theorem in this paper, one obtain characterizations of all the eight examples (see Section 4 for details of the examples)
of Lagrangian submanifolds in the nearly K\"ahler~$\nks$.

\section{The nearly K\"ahler $\nks$ and its Lagrangian submanifolds}
\label{sec:lagrsubm}\label{sec:nks}

In this section we  recall the definition of the nearly K\"ahler $\nks$ from \cite{bddv} and
\cite{dlmv} and give some basic properties of  Lagrangian submanifolds which will be useful for the rest of the paper.

Using the natural identification $T_{(p,q)}(\nks) \cong T_p \mathbb{S}^3 \oplus T_q \mathbb{S}^3$, we
write a tangent vector at $(p,q)$ as  $Z(p,q) = \bigl( U(p,q), V(p,q)\bigr)$ or simply $Z=(U,V)$.

The $3$-sphere
$\mathbb{S}^3$ can be regarded as the set of all the unit quaternions in
$\mathbb{H}$, as usual we use the notations
$\mathbf{i},\,\mathbf{j},\,\mathbf{k}$ to denote the imaginary units of
$\mathbb{H}$.
Define the vector fields
\begin{align*}
 E_1(p,q) &= (p\mathbf{i},0),   &   F_1(p,q) &= (0,q\mathbf{i}),\\
 E_2(p,q) &= (p\mathbf{j},0),   &   F_2(p,q) &= (0,q\mathbf{j}),\\
 E_3(p,q) &= -(p\mathbf{k},0),  &   F_3(p,q) &= -(0,q\mathbf{k}).
\end{align*}

These vector fields are mutually orthogonal with respect to the   usual Euclidean product metric
on the nearly K\"ahler~$\nks$. The Lie brackets are $[E_i,E_j]=-2\ijk E_k$, $[F_i,F_j]=-2\ijk F_k$ and~$[E_i,F_j]=0$, where $$\varepsilon_{ijk}=\left\{
\begin{aligned}
& 1, \qquad\,  {\rm if}\ \{ijk\}\  {\rm is\ an\ even\ permutation\ of\ \{123\}},\\
&-1,\quad  {\rm if}\ \{ijk\}\ {\rm is\ an\ odd\ permutation\ of\ \{123\}},\\
& 0, \ \qquad {\rm \ otherwise}.
\end{aligned}
\right.
$$

The almost complex structure~$J$ on  the nearly K\"ahler~$\nks$ is defined by
\begin{equation}
\label{eq:acstructure}
   J(U,V)_{(p,q)} = \frac{1}{\sqrt{3}}\left( 2pq^{-1}V - U, -2qp^{-1}U + V \right)
\end{equation}
for $(U,V) \in T_{(p,q)}(\mathbb{S}^3\times \mathbb{S}^3)$ (see \cite{butruille}). Note that the definition uses
the Lie group structure of the nearly K\"ahler~$\nks$.
The map
\[
     T_{(1,1)}\nks\to T_{(1,1)}\nks : (U,V)\mapsto \frac{1}{\sqrt{3}}(2V - U, -2 V + U)
\]
defines an almost complex structure on the Lie algebra, the tangent space at~$(1,1)$.
By using left translations on the nearly K\"ahler~$\nks$ this map can be extended to an almost complex structure on the nearly K\"ahler~$\nks$.
The left translations on the nearly K\"ahler~$\nks$ are given by left multiplications with a unit quaternion.
The almost complex structure can be described as follows.
The first step is to left translate a vector~$(U,V)$ at~$(p,q)\in \nks$
to $(p^{-1}U, q^{-1}V)$ at the unit element~$(1,1)$. Then this vector is mapped onto
$\tfrac{1}{\sqrt{3}}(2 q^{-1}V-p^{-1}U, -2p^{-1}U + q^{-1}V)$ at the point~$(1,1)$.
When this vector is translated back to~$T_{(p,q)}\nks$, it gives the expression \eqref{eq:acstructure}.

The nearly K\"ahler metric on~$\nks$ is the Hermitian metric associated to the usual Euclidean product
metric on~$\nks$:
\begin{align*}
   g(Z,Z') &= \frac{1}{2} \left(\metric{Z}{Z'} + \metric{JZ}{JZ'}\right)\\
          &= \frac{4}{3} \left(\metric{U}{U'} +  \metric{V}{V'}\right)
             -\frac{2}{3} \left(\metric{p^{-1}U}{q^{-1}V'} +  \metric{p^{-1}U'}{q^{-1}V}\right),
\end{align*}
where~$Z=(U,V)$ and $Z'=(U',V')$. In the first line~$\metric{\cdot}{\cdot}$ stands for the usual Euclidean product metric on~$\mathbb{S}^3\times \mathbb{S}^3$ and in the second line $\metric{\cdot}{\cdot}$ stands for the usual Euclidean metric on~$\mathbb{S}^3$.
By definition the almost complex structure is compatible with the metric~$g$.
An easy calculation gives
\begin{align*}
g(E_i, E_j) &= 4/3  \,\delta_{ij}, &
g(E_i, F_j) &= -2/3 \,\delta_{ij}, &
g(F_i, F_j) &= 4/3  \,\delta_{ij}.
\end{align*}
Note that this metric differs up to a constant factor from the one introduced in \cite{butruille}.
Here we set everything up so that it equals the Hermitian metric associated with the usual Euclidean product metric.
In \cite{butruille}, the factor was chosen in such a way that the standard basis $E_1,E_2,E_3,F_1,F_2,F_3$ has volume 1.

\begin{lemma}[\cite{bddv}]
\label{lem:levicivita}
The Levi-Civita connection $\tilde \nabla$ on $\nks$ with respect to the metric~$g$ is given by
\begin{align*}
 \lcc_{E_i} E_j &= -\ijk E_k,                     &   \lcc_{E_i} F_j &= \frac{\ijk}{3}(E_k - F_k),\\
 \lcc_{F_i} E_j &= \frac{\ijk}{3} (F_k - E_k),    &   \lcc_{F_i} F_j &= -\ijk F_k.
\end{align*}
\end{lemma}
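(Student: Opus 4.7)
The plan is to apply the Koszul formula to the left-invariant frame $\{E_1,E_2,E_3,F_1,F_2,F_3\}$. Recall
\begin{equation*}
2\metri{\lcc_X Y}{Z} = X\metri{Y}{Z} + Y\metri{X}{Z} - Z\metri{X}{Y} + \metri{[X,Y]}{Z} - \metri{[X,Z]}{Y} - \metri{[Y,Z]}{X}.
\end{equation*}
Since all six basis vectors are left-invariant and $g$ is a left-invariant metric, the three functions $g(Y,Z)$, $g(X,Z)$, $g(X,Y)$ are constant on $\nks$. Hence the first three terms drop out and the formula reduces to a purely algebraic expression in the Lie brackets and the constants $g(E_i,E_j)=g(F_i,F_j)=\tfrac{4}{3}\delta_{ij}$, $g(E_i,F_j)=-\tfrac{2}{3}\delta_{ij}$ already recorded in the excerpt, together with the brackets $[E_i,E_j]=-2\ijk E_k$, $[F_i,F_j]=-2\ijk F_k$ and $[E_i,F_j]=0$.

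Next I would compute, for each of the four cases $\lcc_{E_i} E_j$, $\lcc_{E_i} F_j$, $\lcc_{F_i} E_j$, $\lcc_{F_i} F_j$, the six inner products with $Z=E_l$ and $Z=F_l$. For example, for $\lcc_{E_i} F_j$ the Koszul formula becomes
\begin{equation*}
2g(\lcc_{E_i} F_j, Z) = -g([E_i,Z], F_j) - g([F_j,Z], E_i),
\end{equation*}
which with $Z=E_l$ gives $-\tfrac{4}{3}\epsilon_{ilj}$ and with $Z=F_l$ gives $-\tfrac{4}{3}\epsilon_{jli}=-\tfrac{4}{3}\epsilon_{ijl}$. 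Writing $\lcc_{E_i} F_j = \sum_l(a_l E_l + b_l F_l)$ and using the non-diagonal metric, one recovers the $2\times 2$ linear system $2a_l-b_l=\ijk$ component and $-a_l+2b_l=-\ijk$ component (summation over the free index $l$), whose unique solution is $a_l=\ijk/3$, $b_l=-\ijk/3$. This yields $\lcc_{E_i} F_j = \tfrac{\ijk}{3}(E_k-F_k)$, exactly as claimed.

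The remaining three cases are analogous linear systems. For $\lcc_{E_i} E_j$ and $\lcc_{F_i} F_j$ the computation is even simpler because only one bracket in Koszul is nonzero, immediately producing the $-\ijk E_k$ and $-\ijk F_k$ answers; the case $\lcc_{F_i} E_j$ is symmetric to $\lcc_{E_i} F_j$ with the roles of $E$ and $F$ swapped. Once all four cases have been verified, one checks compatibility (\emph{symmetry} of the connection is automatic: $\lcc_{E_i}E_j-\lcc_{E_j}E_i=[E_i,E_j]$ is immediate from the antisymmetry of $\ijk$, and $\lcc_{E_i}F_j - \lcc_{F_j}E_i = 0 = [E_i,F_j]$ from the explicit formulas), confirming we have in fact found the Levi-Civita connection.

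The main obstacle is only bookkeeping: one must keep track of the minus signs in the definitions of $E_3$ and $F_3$ (which flip the usual quaternion brackets into $[E_i,E_j]=-2\ijk E_k$), and be careful that because $g(E_i,F_j)\neq 0$ the inversion from knowing all inner products $g(\lcc_X Y, E_l), g(\lcc_X Y, F_l)$ to the coefficients requires solving the coupled $2\times 2$ system for each index $l$ rather than reading them off. No deeper idea is needed beyond Koszul and left-invariance.
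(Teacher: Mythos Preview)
Your approach via the Koszul formula on the left-invariant frame is correct and is exactly the standard argument; the paper itself does not give a proof here but simply cites the lemma from \cite{bddv}. One minor slip: for $\lcc_{E_i}E_j$ with $Z=E_l$ all three bracket terms in the Koszul formula contribute (not just one), but their combination still produces $2a_l-b_l=-2\epsilon_{ijl}$, $-a_l+2b_l=\epsilon_{ijl}$ and hence $a_l=-\epsilon_{ijl}$, $b_l=0$, so your conclusion is unaffected.
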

 One easily verifies that
\begin{equation}
\label{eq:G}
\begin{split}
 (\lcc_{E_i} J)E_j &= -\frac{2}{3\sqrt{3}}\ijk (E_k + 2F_k), ~(\lcc_{E_i} J)F_j = -\frac{2}{3\sqrt{3}}\ijk (E_k - F_k), \\
 (\lcc_{F_i} J)E_j &= -\frac{2}{3\sqrt{3}}\ijk (E_k - F_k), ~(\lcc_{F_i} J)F_j =  \phantom{-}\frac{2}{3\sqrt{3}}\ijk (2E_k +F_k).
\end{split}
\end{equation}
The tensor field~$G=\lcc J$ is skew-symmetric, i.e.,  $G(X,Y)+G(Y,X)=(\lcc_{X} J)Y+(\lcc_{Y} J)X=0,~\forall ~X,Y\in TM$, hence~($\nks$, $g$, $J$) is nearly K\"ahler.
Moreover, $G$ satisfies the following properties (cf. \cite{BM2001}, \cite{Gr1}):
\begin{equation}
G(X,JY)+JG(X,Y)=0,~g(G(X,Y),Z)+g(G(X,Z),Y)=0.
\end{equation}

For unitary quaternions~$a$,~$b$ and~$c$, the map~$F\colon \nks\to\nks$ given by
$(p,q)\mapsto (apc^{-1},bqc^{-1})$ is an isometry of $(\nks,g)$ (cf.\ the remark after Lemma~2.2 in \cite{podesta}).
Indeed, $F$ preserves the almost complex structure $J$, since
\begin{align*}
 J dF_{(p,q)}(v,w) &= \frac{1}{\sqrt{3}}\bigl(2(apc^{-1})(cq^{-1}b^{-1})bwc^{-1} - avc^{-1}\bigr.,\\
                         &\quad\quad\quad  \bigl. -2(bqc^{-1})(cp^{-1}a^{-1})avc^{-1}+bwc^{-1}\bigr)\\
                   &= dF_{(p,q)}\bigl(J(v,w)\bigr)
\end{align*}
(see also \cite[Proposition~3.1]{moroianu}) and~$F$ preserves the usual metric~$\metric{\cdot\,}{\cdot}$ as well.

Next, we introduce an almost product structure on the nearly K\"ahler~$\nks$. The~$(1,1)$-tensor field~$P$
is defined by
\begin{equation}
 \label{eq:defP}
  PZ = (pq^{-1}V, qp^{-1}U),
\end{equation}
where~$Z=(U,V)$ is a tangent vector~at~$(p,q)$.
The definition of~$P$ also makes use of the Lie group structure of the nearly K\"ahler~$\nks$.
At the Lie algebra level the map
\[
     T_{(1,1)}\nks\to T_{(1,1)}\nks : (U,V)\mapsto (V,U)
\]
defines an almost product structure.
By left translation this structure can be extended to the manifold~$\nks$, similarly as
was done for the almost complex structure $J$.
We summarize the elementary properties of the almost product in the following lemma.
\begin{lemma}[\cite{bddv}]
\label{lem:P}
The almost product structure~$P$ satisfies the following properties:
\begin{subequations}
\begin{align}
   P^2 &= \mathrm{Id},  \text{ i.e.~$P$ is involutive,}\\
   PJ  &=-JP,           \text{ i.e.~$P$ and~$J$ anti-commute,}  \\
   g(PZ,PZ')&=g(Z,Z'),  \text{ i.e.~$P$ is compatible with~$g$,} \\
   g(PZ,Z') &=g(Z,PZ'), \text{ i.e.~$P$ is symmetric.}
\end{align}
\end{subequations}
\end{lemma}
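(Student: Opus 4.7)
The four identities can all be read off the defining formula $PZ|_{(p,q)}=(pq^{-1}V,\,qp^{-1}U)$. My plan is to streamline the verification by first noting that each of the objects involved is left-invariant under the $\nks$ Lie-group action (where the group acts by $(a,b)\cdot(p,q)=(ap,bq)$). Indeed, $P$ is \emph{defined} by left-translating the endomorphism $(U,V)\mapsto(V,U)$ on $T_{(1,1)}\nks$; the almost complex structure $J$ is left-invariant by the very same construction recalled after \eqref{eq:acstructure}; and the usual product metric $\metric{\cdot}{\cdot}$ on $\nks$ is bi-invariant, hence so is $g=\tfrac12\bigl(\metric{\cdot}{\cdot}+\metric{J\cdot}{J\cdot}\bigr)$. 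Consequently, all four identities need only be checked at the identity element $(1,1)$, where the formulas collapse to: $P(U,V)=(V,U)$, $J(U,V)=\tfrac{1}{\sqrt3}(2V-U,\,-2U+V)$, and
\[
 g\bigl((U,V),(U',V')\bigr)=\tfrac{4}{3}\bigl(\metric{U}{U'}+\metric{V}{V'}\bigr)-\tfrac{2}{3}\bigl(\metric{U}{V'}+\metric{U'}{V}\bigr).
\]

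For $P^{2}=\Id$, applying $P$ twice at $(1,1)$ swaps components and swaps back: $P^{2}(U,V)=P(V,U)=(U,V)$. For $PJ=-JP$, I would compute both sides directly at $(1,1)$:
\[
 PJ(U,V)=\tfrac{1}{\sqrt3}(-2U+V,\,2V-U),\qquad JP(U,V)=J(V,U)=\tfrac{1}{\sqrt3}(2U-V,\,-2V+U),
\]
and observe these are negatives of each other. For compatibility, $g(PZ,PZ')$ is obtained from the displayed formula for $g$ by swapping $U\leftrightarrow V$ and $U'\leftrightarrow V'$ simultaneously; all four bilinear pieces are invariant under this joint swap, so $g(PZ,PZ')=g(Z,Z')$. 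For symmetry of $P$, evaluating $g(PZ,Z')$ and $g(Z,PZ')$ at $(1,1)$ yields the same expression $\tfrac{4}{3}(\metric{V}{U'}+\metric{U}{V'})-\tfrac{2}{3}(\metric{U}{U'}+\metric{V}{V'})$.

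An alternative direct route avoids the invariance reduction and simply inserts the general formula at $(p,q)$; here the key lemma is that left multiplication by any unit quaternion is a Euclidean isometry on $\H$, so $\metric{aX}{aY}=\metric{X}{Y}$ and $\metric{aX}{Y}=\metric{X}{a^{-1}Y}$ for $a=pq^{-1}$, combined with the cancellations $pq^{-1}\!\cdot qp^{-1}=1$ and $p^{-1}(pq^{-1}V)=q^{-1}V$. Either path involves only bookkeeping, so there is no real obstacle; the only point worth stating carefully is the left-invariance of $g$, $J$ and $P$, which legitimizes the reduction to the single point $(1,1)$ and is the reason the argument becomes one line per identity.
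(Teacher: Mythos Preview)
Your proof is correct and follows essentially the same route as the paper: the paper simply says the first three identities are verified by direct calculation, and your reduction to the point $(1,1)$ via left-invariance is a clean way to organise exactly that calculation. The one small difference is in the last identity: you compute $g(PZ,Z')=g(Z,PZ')$ directly, whereas the paper observes that it follows formally from the first and third, since $g(PZ,Z')=g(PZ,P^{2}Z')=g(Z,PZ')$.
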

\begin{proof}
The first three equations can be verified with a direct calculation.
The last equation follows from the first and third equation.
\end{proof}

It is elementary to show that the isometries of ($\nks,~g,~J$) also preserve the almost product structure $P$.
Note that~$PE_i = F_i$ and~$PF_i=E_i$. From these equations and Lemma~\ref{lem:levicivita} it follows that
\begin{equation}
\label{eq:H}
\begin{split}
  (\lcc_{E_i} P)E_j &= \phantom{-}   \frac{1}{3}\ijk(E_k +2F_k),~(\lcc_{E_i} P)F_j =              -\frac{1}{3}\ijk(2E_k +F_k),\\
  (\lcc_{E_i} P)F_j &=              -\frac{1}{3}\ijk(E_k +2F_k),~(\lcc_{F_i} P)F_j = \phantom{-}   \frac{1}{3}\ijk(2E_k +F_k).
\end{split}
 \end{equation}
The tensor field~$\lcc P$ does not vanish identically, so the endomorphism $P$ is not a product structure.
However, the almost product structure~$P$ and its covariant derivative~$\lcc P$ admit the following properties.
\begin{lemma}[\cite{bddv}]
\label{lem:propP}
For tangent vector fields $X$, $Y$ on $(\nks,g,J)$ the following equations hold:
 \begin{gather}
  PG(X,Y) + G(PX,PY) = 0, \label{eq:imp1}\\
  (\lcc_X P)JY = J(\lcc_X P)Y, \label{eq:hj}\\
  G(X,PY) + PG(X,Y) = -2J(\lcc_X P)Y,\label{eq:imp2}\\
  (\lcc_X P)PY + P(\lcc_X P)Y=0,\label{eq:h2}\\
  (\lcc_X P)Y + (\lcc_{PX} P)Y =0,\label{eq:h3}\\
  \overline\nabla P=0.
 \end{gather}
\end{lemma}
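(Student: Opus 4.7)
The plan is to verify each of the six identities by combining direct computation on the basis vector fields $\{E_i, F_j\}$ with algebraic manipulation of the structure relations $P^2 = \Id$, $PJ + JP = 0$, and $\lcc J = G$. Because $P$, $J$, $G$ and $\lcc P$ are all tensor fields, every stated equation is tensorial in $X$ and $Y$, so it suffices to check them when $X$ and $Y$ range over $\{E_1,E_2,E_3,F_1,F_2,F_3\}$, using formula \eqref{eq:G} for $G$, formula \eqref{eq:H} for $\lcc P$, the action $PE_i = F_i$, $PF_i = E_i$, and the explicit form of $J$.

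I would first establish \eqref{eq:imp1} by a direct case check; for instance $PG(E_i,E_j) = -\tfrac{2}{3\sqrt{3}}\ijk(F_k + 2E_k)$ while $G(PE_i,PE_j) = G(F_i,F_j) = \tfrac{2}{3\sqrt{3}}\ijk(2E_k + F_k)$, so the sum vanishes, and the three remaining type-combinations $(E,F)$, $(F,E)$, $(F,F)$ are entirely analogous. Identity \eqref{eq:h2} is immediate from differentiating $P^2 = \Id$, and \eqref{eq:h3} follows from another short basis check exploiting the symmetry between the top and bottom rows of \eqref{eq:H} under $E_i \leftrightarrow F_i$. To obtain \eqref{eq:hj} and \eqref{eq:imp2} together, I would differentiate $PJ + JP = 0$ to produce
\[
(\lcc_X P)JY + J(\lcc_X P)Y = -G(X,PY) - PG(X,Y),
\]
then verify \eqref{eq:imp2} on basis vectors; substituting \eqref{eq:imp2} back into the displayed equation immediately yields \eqref{eq:hj}.

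For the final assertion $\overline\nabla P = 0$, I would interpret $\overline\nabla$ as the canonical Hermitian connection associated to $(\nks,g,J)$, namely $\overline\nabla_X Y = \lcc_X Y - \tfrac{1}{2}JG(X,Y)$, which is the unique metric connection with $\overline\nabla J = 0$ and totally skew-symmetric torsion (the verification $\overline\nabla J = 0$ uses $G(X,JY) + JG(X,Y) = 0$). A short expansion, together with $PJ = -JP$, gives
\[
(\overline\nabla_X P)Y = (\lcc_X P)Y - \tfrac{1}{2}J\bigl[G(X,PY) + PG(X,Y)\bigr],
\]
and substituting \eqref{eq:imp2} into the bracket turns the second term into $-(\lcc_X P)Y$, so $\overline\nabla_X P = 0$. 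The main obstacle is the bookkeeping in the basis verifications for \eqref{eq:imp1}, \eqref{eq:h3}, and \eqref{eq:imp2}: formula \eqref{eq:H} has four cases distinguished by whether each argument is of type $E$ or $F$, and care is required with the signs and with the order of $P$ relative to the covariant derivative. Once those cases are organised, the remaining identities follow by the algebraic manipulations sketched above.
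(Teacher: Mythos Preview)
Your proposal is correct. The paper does not supply its own proof of this lemma; it is quoted from~\cite{bddv}, so there is nothing substantive to compare against. Your strategy---direct verification of the tensorial identities \eqref{eq:imp1}, \eqref{eq:h3}, \eqref{eq:imp2} on the frame $\{E_i,F_j\}$ via formulas~\eqref{eq:G} and~\eqref{eq:H}, deduction of \eqref{eq:h2} from $P^2=\Id$, deduction of \eqref{eq:hj} by differentiating $PJ+JP=0$ and invoking \eqref{eq:imp2}, and derivation of $\overline\nabla P=0$ from the canonical Hermitian connection $\overline\nabla_X Y=\lcc_X Y-\tfrac12 JG(X,Y)$ combined with \eqref{eq:imp2}---is exactly the natural route and all the algebraic steps check out. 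One small caveat: the display~\eqref{eq:H} in the paper contains a typographical repetition (the entry $(\lcc_{F_i}P)E_j$ is misprinted as a second copy of $(\lcc_{E_i}P)F_j$); for your basis check of \eqref{eq:h3} you need the corrected value $(\lcc_{F_i}P)E_j=-\tfrac{1}{3}\ijk(E_k+2F_k)$, which is easy to recompute from Lemma~\ref{lem:levicivita}.
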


 The Riemannian  curvature tensor~$\tilde R$ on $(\nks,g)$ is given by
\begin{equation*}
 \begin{split}
  \tilde R(U,V)W &= \frac{5}{12}\bigl(g(V,W)U - g(U,W)V\bigr) \\
                 &\quad  +\frac{1}{12}\bigl(g(JV,W)JU - g(JU,W)JV - 2g(JU,V)JW\bigr) \\
                 &\quad + \frac{1}{3}\bigl(g(PV,W)PU - g(PU,W)PV  \bigr.\\
                        &\quad  \phantom{\frac{2}{3\sqrt{3}}}\quad\mbox{ } + \bigl. g(JPV,W)JPU - g(JPU,W)JPV\bigr).
 \end{split}
\end{equation*}
One can now show that the nearly K\"ahler~$\nks$ is of constant type~$\frac{1}{3}$ and therefore we have
\begin{align}
g\bigl(G(X,Y),G(Z,W)\bigr) &=\frac{1}{3} \bigl(g(X,Z) g(Y,W)-g(X,W)g(Y,Z)\bigr. \label{eq:inprod}\\
                         &\qquad + \bigl. g(JX,Z)g(JW,Y)-g(JX,W)g(JZ,Y)\bigr), \nonumber\\
G\bigl(X,G(Y,Z)\bigr) &= \frac{1}{3}\bigl( g(X,Z)Y-g(X,Y)Z\bigr. \label{eq:GG}\\
                     & \qquad +\bigl. g(JX,Z)JY -g(JX,Y)JZ\bigr), \nonumber \\
(\lcc G)(X,Y,Z)&= \frac{1}{3} (g(X,Z) JY -g(X,Y) JZ -g(JY,Z)X). \label{eq:nablaG}
\end{align}

For later use, we  also need  the relation between the geometry of the nearly K\"ahler manifold~$(\nks,g)$
and the product manifold~$(\nks,\metric{\cdot}{\cdot})$, which is~$\nks$ endowed with the usual Euclidean product metric.
The equations in this paragraph shall be used every time we want to obtain an explicit parametrization
of a submanifold in the nearly K\"ahler~$\nks$.

The almost product structure $P$ can be expressed in terms of the usual product structure~$QZ= Q(U,V)= (-U,V)$
and vice versa:
\begin{align}
  QZ &= \frac{1}{\sqrt{3}} (2 PJZ - JZ), \label{eq:Q}\\
  PZ &= \frac{1}{2}(Z-\sqrt{3}QJZ). \label{eq:Q2}
\end{align}
Using these equations the Euclidean product metric~$\metric{\cdot}{\cdot}$ can be expressed in
terms of~$g$ and~$P$:
\begin{equation}
\label{eq:metric}
  \metric{Z}{Z'} = \frac{3}{8}\bigl(g(Z,Z')+g(QZ,QZ')\bigr) = g(Z,Z')+ \frac{1}{2}g(Z,PZ'),
\end{equation}
and consequently
\begin{equation}\label{eq:Qmetric}
 \metric{Z}{QZ'} = \frac{\sqrt{3}}{2}g(Z,PJZ').
\end{equation}
We can now show the relation between the Levi-Civita connections~$\lcc$ of~$g$
and~$\ne$ of the  usual Euclidean product metric ~$\metric{\cdot}{\cdot}$ on~$\nks$.
\begin{lemma}[\cite{dlmv}]\label{lem:connection}
The relation between the nearly K\"ahler connection~$\lcc$ and the Euclidean connection~$\ne$ is
 \begin{equation} \label{eq:connection}
  \ne_X Y= \lcc_X Y +\frac{1}{2}\bigl(JG(X,PY) + JG(Y,PX)\bigr).
 \end{equation}
\end{lemma}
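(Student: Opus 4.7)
The plan is to check that the candidate connection
$$
\widetilde\nabla_X Y := \lcc_X Y + \tfrac{1}{2}\bigl(JG(X,PY)+JG(Y,PX)\bigr)
$$
is the Levi-Civita connection of the Euclidean product metric $\metric{\cdot}{\cdot}$; by uniqueness this then forces $\ne = \widetilde\nabla$. Torsion-freeness is immediate since $\lcc$ is torsion-free and the correction term is manifestly symmetric in $X$ and $Y$. So the whole task reduces to verifying metric compatibility with respect to $\metric{\cdot}{\cdot}$.

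Before doing that, I would simplify the correction term using \eqref{eq:imp2}. Rewriting $G(X,PY) = -PG(X,Y) - 2J(\lcc_X P)Y$ and applying $J$, together with $J^2=-\Id$ and $JP=-PJ$, gives $JG(X,PY) = PJG(X,Y)+2(\lcc_X P)Y$. Swapping $X$ and $Y$ and using skew-symmetry of $G$ (so that $PJG(Y,X) = -PJG(X,Y)$), the two terms add to
$$
\tfrac12\bigl(JG(X,PY)+JG(Y,PX)\bigr) = (\lcc_X P)Y + (\lcc_Y P)X =: S(X,Y).
$$
This makes the remainder much more tractable, because the object $\lcc P$ is directly controlled by the identities in Lemma~\ref{lem:propP}.

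For the metric check, I would use \eqref{eq:metric} to write $\metric{Y}{Z} = g(Y,Z) + \tfrac12 g(Y,PZ)$, then expand $X\metric{Y}{Z}$ by the Leibniz rule, using compatibility of $\lcc$ with $g$ and $\lcc_X(PZ) = P\lcc_X Z + (\lcc_X P)Z$. All terms reassemble into $\metric{\lcc_X Y}{Z} + \metric{Y}{\lcc_X Z}$ except for a single leftover $\tfrac12 g(Y,(\lcc_X P)Z)$. The remaining task is to show that $\metric{S(X,Y)}{Z} + \metric{Y}{S(X,Z)}$ equals exactly this leftover. Here one needs three facts: that $(\lcc_X P)$ is $g$-symmetric (obtained by differentiating $g(PY,Z)=g(Y,PZ)$), the anti-commutation $(\lcc_X P)P = -P(\lcc_X P)$ from \eqref{eq:h2}, and the identity $(\lcc_{PX}P) = -(\lcc_X P)$ from \eqref{eq:h3}; repeated use of these produces pairwise cancellations among the terms of the form $g(\cdot,P(\lcc P)\cdot)$.

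The main obstacle I foresee is precisely this last bookkeeping step: several nearly identical-looking terms must cancel in the right way, and mis-applying symmetry of $P$ versus antisymmetry of $(\lcc P)$ with $P$ is easy. If that route bogs down, a safer fallback is direct verification on the left-invariant basis $\{E_i, F_j\}$: because $(\nks,\metric{\cdot}{\cdot})$ is a product of two Lie groups carrying bi-invariant metrics, one has $\ne_X Y = \tfrac12[X,Y]$ for left-invariant fields, which together with Lemma~\ref{lem:levicivita}, the formulas \eqref{eq:G} for $G$, and the actions $PE_i=F_i,\,PF_i=E_i$, reduces the identity to four concrete cases. The only nontrivial one is $(E_i,F_j)$, where the key simplification is $J(E_k+F_k) = \tfrac{1}{\sqrt 3}(E_k-F_k)$.
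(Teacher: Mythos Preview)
The paper does not actually prove this lemma: it is cited from \cite{dlmv} and stated without argument. So there is no ``paper's own proof'' to compare against.

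That said, your proposal is sound. The reduction of the correction term via \eqref{eq:imp2} to $S(X,Y)=(\lcc_X P)Y+(\lcc_Y P)X$ is correct and a genuinely useful simplification. The metric-compatibility check then does reduce, as you say, to balancing terms of the form $g(Y,(\lcc_\bullet P)\bullet)$ and $g(\cdot,P(\lcc_\bullet P)\cdot)$ using the $g$-symmetry of $\lcc_X P$ together with \eqref{eq:h2} and \eqref{eq:h3}; the cancellations are real, though the bookkeeping is exactly as delicate as you warn. Your fallback---checking the identity on the left-invariant frame using $\ne_X Y=\tfrac12[X,Y]$ for bi-invariant metrics, Lemma~\ref{lem:levicivita}, the formulas \eqref{eq:G}, and $PE_i=F_i$---is the more robust route and goes through cleanly; the observation $J(E_k+F_k)=\tfrac{1}{\sqrt3}(E_k-F_k)$ is precisely what makes the mixed $(E_i,F_j)$ case collapse. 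Either approach would constitute a complete proof where the paper simply quotes the result.
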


\begin{remark}
Using the above lemma and the expression for~$Q$ one can show that~$(\nabla^E_X Q)Y = 0$ implies equation~\eqref{eq:hj}
and vice versa.
In this sense~$P$ really is the ``nearly K\"ahler analogue'' of the Euclidean product structure~$Q$.
\end{remark}

In \cite{schafer},
Sch\"afer and Smoczyk gave a broader study of Lagrangian
submanifolds in a nearly K\"ahler manifold, they also showed that the classical
result of Ejiri \cite{ejiri},  that a Lagrangian submanifold of the nearly K\"ahler~
$\mathbb{S}^6$ is always minimal and orientable, holds actually for
arbitrary 6-dimensional strict nearly K\"ahler~ manifolds (see also \cite{GIP}).
From now on we will assume that $M$ is a Lagrangian submanifold in  the nearly K\"ahler~$\nks$. Hence $M$ is $3$-dimensional and the almost complex structure $J$ maps tangent vectors to normal vectors.
Like Lagrangian submanifolds of the nearly K\"ahler~
$\mathbb{S}^6$, from \cite{GIP} or \cite{schafer} it follows:
\begin{lemma}[cf. \cite{GIP}, \cite{schafer}]\label{lem:lagr} \label{lem:gnorm} Let $M$ be a Lagrangian submanifold of  the nearly K\"ahler~$\nks$. Then $M$ is minimal and orientable.
Moreover, for $X,Y$ tangent to $M$, $G(X,Y)$ is a normal vector field on $M$.
\end{lemma}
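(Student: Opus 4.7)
The plan is to derive all three conclusions from a single object: the cubic form $C(X,Y,Z) := g(h(X,Y), JZ)$ on $TM$. Writing $h(X,Y)$ as the normal component of $\lcc_X Y$, using the $g$-skew-adjointness of $J$, and expanding $\lcc_X(JZ) = G(X,Z) + J\lcc_X Z$, a short direct calculation yields
\begin{equation*}
  C(X,Y,Z) - C(X,Z,Y) = g(G(X,Y), Z)
\end{equation*}
for all tangent $X, Y, Z$. Taking the cyclic sum, the left-hand side vanishes by symmetry of $h$; the right-hand side, the $3$-form $\omega(X,Y,Z) := g(G(X,Y), Z)$ (skew in $X,Y$ from the nearly K\"ahler condition and skew in $Y,Z$ from $g(G(X,Y),Z) + g(G(X,Z),Y) = 0$), is totally antisymmetric and cyclically sums to $3\omega(X,Y,Z)$. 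Hence $g(G(X,Y), Z) = 0$ for all tangent $X, Y, Z$, proving the normality claim, and substituting back shows that $C$ is totally symmetric.

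For orientability, I evaluate $G(e_1, G(e_2, e_3))$ for a local $g$-orthonormal frame of $M$ via \eqref{eq:GG}: each term on the right pairs a tangent with a normal vector and hence vanishes. This forces $G(e_2, e_3) \in \ker G(e_1, \cdot) = \spn\{e_1, Je_1\}$, and combining with the just-proved normality of $G$ together with $|G(e_2,e_3)|^2 = 1/3$ (from \eqref{eq:inprod}, using $g(JX,Y)=0$ on a Lagrangian), we obtain $G(e_2,e_3) = \pm \tfrac{1}{\sqrt{3}}Je_1$. The $3$-form $\Omega(X,Y,Z) := \sqrt{3}\, g(G(X,Y), JZ)$ on $TM$, whose total skew-symmetry follows from the $G$-identities together with $G(X, JY) = -JG(X,Y)$, therefore satisfies $|\Omega(e_1,e_2,e_3)| = 1$, producing the required orientation.

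Finally, for minimality I compute $\sum_i (\lcc_{e_i} G)(e_j, e_i)$ in two ways. Formula~\eqref{eq:nablaG} gives this sum as $\tfrac{2}{3}Je_j$ directly (after using $g(Je_j, e_i) = 0$). On the other hand, expanding $(\lcc_{e_i}G)(e_j, e_i) = \lcc_{e_i}G(e_j,e_i) - G(\lcc_{e_i}e_j, e_i) - G(e_j, \lcc_{e_i}e_i)$ at a point where $\nabla e_l = 0$, so $\lcc_{e_i}e_l = h(e_i,e_l)$, and inserting the formula $G(e_j,e_l) = \tfrac{1}{\sqrt{3}}\varepsilon_{jlm}Je_m$ from the previous paragraph, the two intermediate $h$-contributions cancel individually: each of them pairs the totally symmetric tensor $h_{ilm} := g(h(e_i,e_l), Je_m)$ against an antisymmetric $\varepsilon$-symbol in two of its indices. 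What remains is $\tfrac{2}{3}Je_j - 3\,G(e_j, H)$, so equating the two evaluations forces $G(e_j, H) = 0$ for every $j$. Writing the normal vector $H$ as $JV$ with $V \in TM$ and using $G(e_j, JV) = -JG(e_j, V)$, this becomes $G(e_j, V) = 0$; by \eqref{eq:inprod} (in which the $g(JX,V)$ term vanishes by Lagrangianness) this can only happen if $V$ is proportional to each basis vector $e_j$, hence $V = 0$ and $H = 0$. The main technical difficulty I foresee is the careful bookkeeping of this last step: one must track several contractions of $\varepsilon$-symbols and the symmetries of $h_{ilm}$ to confirm that the only surviving contribution beyond the structural $\tfrac{2}{3}Je_j$ is the mean-curvature term.
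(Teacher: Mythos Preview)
Your argument is correct in all three parts. The identity $C(X,Y,Z)-C(X,Z,Y)=g(G(X,Y),Z)$ and the cyclic-sum trick cleanly give both the normality of $G(X,Y)$ and the total symmetry of $C$; the use of \eqref{eq:GG} and \eqref{eq:inprod} to pin $G(e_2,e_3)$ to $\pm\tfrac{1}{\sqrt{3}}Je_1$ and hence produce a global nowhere-vanishing $3$-form is sound; and the two-way evaluation of $\sum_i(\lcc_{e_i}G)(e_j,e_i)$ via \eqref{eq:nablaG} versus the Leibniz expansion does indeed reduce to $G(e_j,H)=0$, from which $H=0$ follows by \eqref{eq:inprod}. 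The bookkeeping you were worried about is fine: the two intermediate $h$-contributions each contract the totally symmetric $h_{ilm}$ against $\varepsilon$ in a pair of indices ($i,m$ for the $Jh(e_i,e_m)$ piece and $i,l$ for the $G(h(e_i,e_j),e_i)$ piece), so both vanish separately.

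As for comparison with the paper: the paper does \emph{not} prove this lemma at all---it simply quotes it from \cite{GIP} and \cite{schafer}. The only argument the paper supplies is an independent re-derivation of minimality (see the remark following Lemma~\ref{lem:sff}), and that route is quite different from yours: it relies on the almost product structure $P$ specific to $\nks$, the associated angle functions $\theta_i$, and the relation $E_i(\theta_j)=-h^i_{jj}$ together with $\theta_1+\theta_2+\theta_3\equiv 0\pmod\pi$ to conclude $\sum_j h^i_{jj}=0$. Your proof, by contrast, uses only the nearly K\"ahler identities \eqref{eq:inprod}--\eqref{eq:nablaG} and would go through verbatim in any six-dimensional strict nearly K\"ahler manifold of constant type, which is the natural level of generality for this statement. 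The paper's alternative argument is more concrete and ties in with the machinery used later, but it is confined to $\nks$ and presupposes the normality of $G(X,Y)$ (which enters the derivation of Lemma~\ref{lem:sff}), so it does not stand on its own.
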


If we denote the immersion by $f$, the formulas of Gauss and
Weingarten are respectively given by
\begin{align}
&\widetilde \nabla_X f_\ast Y = f_\ast(\nabla_X Y) + h(X,Y),\label{eqgauss}\\
&\widetilde \nabla_X \eta = - f_\ast(S_\eta X) + \nabla^\perp_X
\eta,\label{eqweingarten}
\end{align}
for tangent vector fields $X$ and $Y$ and a normal vector field $\eta$.
The second fundamental form $h$ is related to $S_\eta$ by
$g(h(X,Y),\eta)=g(S_\eta X,Y)$.
From \eqref{eqgauss} and \eqref{eqweingarten}, we find that
\begin{align}
&\nabla^\perp_X Jf_\ast(Y) = J f_\ast(\nabla_X Y) + G(f_\ast
X,f_\ast Y),\label{normalconnection}\\
&f_\ast(S_{JY} X) = -J h(X,Y).\label{tangentpart}
\end{align}
The above formulas immediately imply that
\begin{equation}
g(h(X,Y),Jf_*Z) =g(h(X,Z),Jf_*Y),\label{symmetryh}
\end{equation}
i.e. $g(h(X,Y),Jf_*Z)$ is totally symmetric.
Of course as usual whenever there is no confusion, we will
drop the immersion $f$ from the notations.

\section{Lagrangian submanifolds of the nearly K\"ahler~$\nks$}
\label{sec:lsnks}

Note that in the previous section most of the results remain valid for Lagrangian submanifolds of arbitrary $6$-dimensional strict nearly K\"ahler manifolds. Here however we will  restrict ourselves to the case that the ambient space is  the nearly K\"ahler~$\nks$. We will show how the properties of the almost product structure $P$, related to the product structure on  the nearly K\"ahler~$\nks$ incorporates most of the geometry of the Lagrangian submanifold.
The key idea is to ``decompose'' the almost product structure~$P$ into a tangent part~$A$ and a normal part~$B$.

Let~$M$ be a Lagrangian submanifold of the nearly K\"ahler~$\nks$. Since~$M$ is Lagrangian,
the pull-back of~$T(\nks)$ to~$M$ splits into~$TM\oplus JTM$.
Therefore there are two endomorphisms~$A,B\colon TM\to TM$ such that the restriction~$P|_{TM}$ of~$P$ to
 the submanifold $M$  equals $A+ JB$, that is
$PX = AX + JBX$ for all~$X\in TM$.
Note that the above formula, together with the fact that $P$ and $J$ anticommute, also determine $P$ on the normal space
by~$PJX = -JPX = BX- JAX$.
The following lemma gives the basic properties of~$A$ and~$B$.

\begin{lemma}
The endomorphisms~$A$ and~$B$ are symmetric commuting endomorphisms
that satisfy~$A^2+B^2 = \Id$.
\end{lemma}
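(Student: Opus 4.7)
The plan is to exploit the three properties of $P$ from Lemma~\ref{lem:P}: the involutivity $P^2=\Id$, the anti-commutation $PJ=-JP$, and the symmetry $g(PZ,Z')=g(Z,PZ')$, together with the orthogonal splitting $T_{(p,q)}(\nks)=T_pM\oplus JT_pM$ coming from the Lagrangian condition. The first step is to describe how $P$ acts on the normal bundle: for $X\in TM$, the identity $PJ=-JP$ gives
$$P(JX)=-JPX=-J(AX+JBX)=BX-JAX,$$
so $P$ sends the normal vector $JX$ to a tangent part $BX$ plus a normal part $-JAX$. Combined with $PX=AX+JBX$ on $TM$, this fixes the action of $P$ on the whole ambient tangent bundle along $M$ and will serve as the bookkeeping backbone for the remainder of the argument.

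Symmetry of $A$ then follows by pairing with a tangent vector: for $X,Y\in TM$, since $JBX$ is normal and hence orthogonal to $Y$,
$$g(AX,Y)=g(AX+JBX,Y)=g(PX,Y)=g(X,PY)=g(X,AY).$$
For symmetry of $B$, I would instead pair with the normal vector $JY$, exploiting the Hermitian property $g(JU,JV)=g(U,V)$ together with the auxiliary formula above:
$$g(BX,Y)=g(JBX,JY)=g(PX-AX,JY)=g(PX,JY)=g(X,PJY)=g(X,BY),$$
the last equality using $PJY=BY-JAY$ and the orthogonality of $JAY$ with $X\in TM$.

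For the remaining identities, I would expand $P^2X$ for $X\in TM$, applying the tangent formula to $AX$ and the normal formula to $JBX$:
$$P^2X=P(AX)+P(JBX)=(A^2X+JBAX)+(B^2X-JABX)=(A^2+B^2)X+J(BA-AB)X.$$
Since $P^2=\Id$ and the decomposition $TM\oplus JTM$ is direct, comparing tangent and normal components yields $A^2+B^2=\Id$ and $AB=BA$. The proof is really just linear algebra in the tangent/normal splitting; the only ``obstacle'' is keeping track of which pieces are tangent and which are normal as $P$ is applied, and the single auxiliary formula $P(JX)=BX-JAX$ handles this uniformly.
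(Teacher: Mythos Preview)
Your proof is correct and follows essentially the same approach as the paper: you use the symmetry of $P$ to obtain the symmetry of $A$ and $B$, and the involutivity $P^2=\Id$ together with the tangent/normal splitting to get $A^2+B^2=\Id$ and $[A,B]=0$. The only cosmetic difference is that you isolate the formula $P(JX)=BX-JAX$ at the outset, whereas the paper uses it implicitly (for instance computing $g(BX,Y)=g(PJX,Y)$ directly).
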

\begin{proof}
The lemma follows easily from the basic properties of~$P$ and $J$ ($P$ is symmetric, $J$ is compatible with $g$).
For~$X,Y\in TM$ we have~$\metri{AX}{Y}=\metri{PX}{Y}= \metri{X}{PY}=\metri{X}{AY}$.
Similarly one finds~$\metri{BX}{Y}=\metri{PJX}{Y}=\metri{PY}{JX}=\metri{JBY}{JX}=\metri{BY}{X}$.
Since~$P$ is involutive, we also have
\[
  X = P^2 X = (A^2+B^2)X + J(BA-AB)X.
\]
Comparing the tangent and normals parts gives~$A^2+B^2 = \Id$ and~$[A,B]=0$.
\end{proof}

As $A$ and $B$ are symmetric operators whose Lie bracket vanishes,
we know that they can be diagonalized  simultaneously at a point of~$M$. Therefore, at each point $p$ there is an orthonormal basis~$e_1$, $e_2$, $e_3\in T_p M$ such that
\begin{align*}
 Pe_i &= \cos 2\theta_i e_i + \sin 2\theta_i Je_i,~\forall~i=1,2,3.
\end{align*}
The factor~$2$ in the arguments of the sines and cosines is there for convenience as it will simplify many of the following expressions.

Now we  extend the orthonormal basis~$e_1$, $e_2$, $e_3$ at a point $p$ to a frame on a
neighborhood of $p$ in the Lagrangian submanifold. By Lemma~1.1-1.2 in~\cite{szabo} the orthonormal basis at a point
can be extended to a differentiable frame~$E_1$,~$E_2$ $E_3$ on an open dense neighborhood
where the multiplicities of the eigenvalues of~$A$ and~$B$ are constant.
Taking also into account the properties of $G$ we know that there exists a local
orthonormal frame $\{E_1,E_2,E_3\}$ on an open dense subset of $M$
such that
\begin{equation}\label{eqn:3.12}
 A E_i= \cos(2 \theta_i ) E_i,~B E_i = \sin(2 \theta_i) E_i,~J G(E_i,E_j)=\tfrac{1}{\sqrt{3}} \varepsilon_{ijk} E_k.
\end{equation}

\begin{lemma}
\label{lem:sumzero}
The sum of the angles~$\theta_1 + \theta_2  + \theta_3$ is zero modulo~$\pi$.
\end{lemma}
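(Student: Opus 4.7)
The plan is to extract the constraint on the angles from equation \eqref{eq:imp1}, namely $PG(X,Y)+G(PX,PY)=0$, evaluated on the frame $\{E_1,E_2,E_3\}$. The hypothesis \eqref{eqn:3.12} tells us both how $P$ acts on $E_i$ and what $G(E_i,E_j)$ is, so this should give an algebraic relation between $\theta_1$, $\theta_2$, $\theta_3$.

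First I would rewrite the ingredients in a convenient form. From \eqref{eqn:3.12} we get $G(E_i,E_j)=-\frac{1}{\sqrt{3}}\varepsilon_{ijk}JE_k$, and $PE_i=\cos(2\theta_i)E_i+\sin(2\theta_i)JE_i$. Applying $P$ to $G(E_i,E_j)$ and using $PJ=-JP$ together with $PE_k=\cos(2\theta_k)E_k+\sin(2\theta_k)JE_k$ yields
\[
PG(E_i,E_j)=\tfrac{1}{\sqrt{3}}\varepsilon_{ijk}\bigl(\cos(2\theta_k)JE_k-\sin(2\theta_k)E_k\bigr).
\]

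Next I would expand $G(PE_i,PE_j)$ by bilinearity. Using the identity $G(X,JY)=-JG(X,Y)$ (from the properties listed after \eqref{eq:G}) and the skew-symmetry $G(Y,X)=-G(X,Y)$, one checks
\[
G(JE_i,E_j)=G(E_i,JE_j)=-JG(E_i,E_j),\qquad G(JE_i,JE_j)=-G(E_i,E_j).
\]
Grouping terms by angle-sum identities gives
\[
G(PE_i,PE_j)=\cos(2\theta_i+2\theta_j)\,G(E_i,E_j)-\sin(2\theta_i+2\theta_j)\,JG(E_i,E_j),
\]
which in terms of the frame becomes
\[
G(PE_i,PE_j)=-\tfrac{1}{\sqrt{3}}\varepsilon_{ijk}\bigl(\cos(2\theta_i+2\theta_j)JE_k+\sin(2\theta_i+2\theta_j)E_k\bigr).
\]

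Adding the two expressions and using $PG(E_i,E_j)+G(PE_i,PE_j)=0$, the coefficients of $E_k$ and $JE_k$ must both vanish. Taking $(i,j,k)=(1,2,3)$ this yields
\[
\cos(2\theta_3)=\cos(2\theta_1+2\theta_2),\qquad \sin(2\theta_3)=-\sin(2\theta_1+2\theta_2),
\]
hence $2\theta_3\equiv -(2\theta_1+2\theta_2)\pmod{2\pi}$, i.e.\ $\theta_1+\theta_2+\theta_3\equiv 0\pmod{\pi}$. The computation is essentially routine; the main point is to recognize that \eqref{eq:imp1} is exactly the right identity to exploit, since it couples $P$ (which carries the angle information) with $G$ (which couples $E_i$, $E_j$ to $E_k$ via $\varepsilon_{ijk}$).
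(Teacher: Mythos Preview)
Your proof is correct and follows essentially the same approach as the paper: both arguments evaluate the identity \eqref{eq:imp1} on the diagonalizing frame, using $PJ=-JP$ and the relation $JG(E_i,E_j)=\tfrac{1}{\sqrt{3}}\varepsilon_{ijk}E_k$ to extract the two trigonometric equations $\cos 2\theta_k=\cos 2(\theta_i+\theta_j)$, $\sin 2\theta_k=-\sin 2(\theta_i+\theta_j)$. The paper merely packages the computation slightly differently, writing $PE_1=\sqrt{3}\,JG(PE_2,PE_3)$ and comparing tangent and normal parts, whereas you expand $PG(E_i,E_j)+G(PE_i,PE_j)=0$ directly; the content is identical.
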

\begin{proof}
 Using equation~\eqref{eq:imp1} and (2.4b), we get~$$PE_1 = \sqrt{3}PJG(E_2,E_3) =\sqrt{3} JG(PE_2,PE_3)$$
 and thus ~$\cos 2\theta_1 E_1 + \sin 2 \theta_1 JE_1(=PE_1)$ is equal to
 \[
 \sqrt{3}\Bigl( \cos(2(\theta_2+\theta_3)) JG(E_2,E_3)  +\sin(2(\theta_2+\theta_3)) G(E_2,E_3)\Bigr).
 \]
 Comparing tangent and normal parts gives
 \begin{align*}
  \cos 2\theta_1 &=\cos(2(\theta_2+\theta_3)), ~ \sin 2 \theta_1 =  -\sin(2(\theta_2+\theta_3).
 \end{align*}
 Therefore
 \[
 \cos(2(\theta_1+\theta_2+\theta_3))=\cos 2\theta_1 \cos(2(\theta_2+\theta_3))- \sin 2\theta_1 \sin(2(\theta_2+\theta_3))=1,
 \]
 so~$\theta_1+\theta_2+\theta_3= 0 \mod \pi$.
\end{proof}

Using the decomposition of $P$ and the expression of the curvature tensor of  the nearly K\"ahler~$\nks$ we can now write down the expressions for the equations of Gauss and Codazzi.
We have the equation of Gauss as follows.
\begin{equation}
 \begin{split}
   R(X,Y)Z &= \frac{5}{12}\bigl(g(Y,Z)X - g(X,Z)Y\bigr) \\
                 &\quad + \frac{1}{3}\bigl(g(AY,Z)AX - g(AX,Z)AY  +  g(BY,Z)BX - g(BX,Z)BY\bigr)\\
                 &\quad +S_{h(Y,Z)} X-S_{h(X,Z)} Y.
 \end{split}\label{gauss}
\end{equation}
Note that in view of the symmetry of the second fundamental form the above Gauss equation  can be rewritten as
\begin{equation}
 \begin{split}
   R(X,Y)Z &= \frac{5}{12}\bigl(g(Y,Z)X - g(X,Z)Y\bigr) \\
                 &\quad + \frac{1}{3}\bigl(g(AY,Z)AX - g(AX,Z)AY  +  g(BY,Z)BX - g(BX,Z)BY\bigr)\\
                 &\quad +[S_{JX},S_{JY}] Z.
 \end{split}\label{gauss2}
\end{equation}
By taking the normal part of the curvature tensor, we  have that  the Codazzi equation is given by
\begin{equation}
\begin{split}
(\nabla h)&(X,Y,Z))-(\nabla h) (Y,X,Z)=\\
&\frac{1}{3}\bigl(g(AY,Z)JBX - g(AX,Z)JBY  -  g(BY,Z)JAX + g(BX,Z)JAY\bigr).
\end{split}
\label{codazzi}
\end{equation}

Analogously, like Lagrangian immersions of the nearly K\"ahler $\mathbb{S}^6$, we find that the Ricci equation is equivalent with the Gauss equation. Indeed from \eqref{normalconnection},  \eqref{eq:nablaG}
and the fact that $G(X,Y)$ is a normal vector field we get that
\begin{equation}
R^\perp(X,Y)JZ=J R(X,Y)Z+\tfrac{1}{3} (g(X,Z)JY -g(Y,Z)JX).\label{normalcurvature}
\end{equation}
Therefore by applying  the Gauss equation \eqref{gauss}, we recover that
\begin{equation}
\begin{split}
R^\perp(X,Y)JZ=& \frac{1}{12}\bigl(g(Y,Z)JX - g(X,Z)JY\bigr) \\
                 &\quad + \frac{1}{3}\bigl(g(AY,Z)JAX - g(AX,Z)JAY  +  g(BY,Z)JBX - g(BX,Z)JBY\bigr)\\
                 &\quad +J[S_{JX},S_{JY}] Z.
                 \end{split}
\end{equation}
Hence by taking the inner product with $JW$ we get the Ricci equation
\begin{align*}
g(R^\perp(X,Y)JZ,JW)&= g(\tilde R(X,Y)JZ,JW)+ g([S_{JX},S_{JY}] Z,W)\\
&= g(\tilde R(X,Y)JZ,JW)+ g(S_{JZ},S_{JW}] X,Y).
\end{align*}

We now calculate the covariant derivatives of~$A$ and~$B$.
\begin{lemma}
\label{lem:covAB}
The covariant derivatives of the endomorphisms~$A$ and~$B$ are
\begin{align*}
 (\nabla_X A)Y &= B S_{JX} Y - Jh(X,BY) +\frac{1}{2}\bigl(JG(X,AY)-AJG(X,Y)\bigr),\\
 (\nabla_X B)Y &=  Jh(X,AY) -A S_{JX} Y +\frac{1}{2}\bigl(JG(X,BY)-BJG(X,Y)\bigr).
\end{align*}
\end{lemma}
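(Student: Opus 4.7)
The plan is to compute $\tilde\nabla_X(PY)$ in two different ways for $X,Y\in TM$ and then to compare tangent and normal parts. On one side, by the Leibniz rule,
\[
\tilde\nabla_X(PY) = (\tilde\nabla_X P)Y + P\,\tilde\nabla_X Y,
\]
and Lemma~\ref{lem:propP}, specifically \eqref{eq:imp2} together with $PJ=-JP$, lets me rewrite $(\tilde\nabla_X P)Y = \tfrac{1}{2}\bigl(JG(X,PY) - PJG(X,Y)\bigr)$. On the other side, starting from the decomposition $PY = AY + JBY$, I will apply Gauss ($\tilde\nabla_X Z=\nabla_X Z+h(X,Z)$ for tangent $Z$) together with $\tilde\nabla J=G$ to expand $\tilde\nabla_X(AY)$ and $\tilde\nabla_X(JBY)$ directly; both expansions produce either $(\nabla_XA)Y$ or $(\nabla_XB)Y$ plus known terms.

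To match the two sides I will first use Lemma~\ref{lem:gnorm} to note that $G(X,Y)$ is normal whenever $X,Y$ are tangent, so $JG(X,Y)\in TM$ and splits as $PJG(X,Y)=AJG(X,Y)+JBJG(X,Y)$; the identity $G(X,JZ)=-JG(X,Z)$ similarly reduces $JG(X,PY)$ to $JG(X,AY)+G(X,BY)$. The one non-obvious ingredient is the term $P\,h(X,Y)$ arising from $P\,\tilde\nabla_X Y$: because $M$ is Lagrangian, $h(X,Y)\in JTM$, and combining \eqref{tangentpart} with the symmetry \eqref{symmetryh} I may write $h(X,Y)=JS_{JX}Y$ with $S_{JX}Y\in TM$, so that $PJ=-JP$ yields
\[
Ph(X,Y) = -JP(S_{JX}Y) = B\,S_{JX}Y - JA\,S_{JX}Y.
\]
This is the step where most of the bookkeeping risk lies, since it is what forces $S_{JX}Y$ (rather than some other shape-operator combination) into the final formulas; once it is in place the rest is just algebra.

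After substituting these reductions and cancelling the common terms $A\nabla_XY$ and $JB\nabla_XY$ that appear on both sides, I will project the resulting identity onto $TM$ and $JTM$ separately, which is meaningful because the normal bundle of the Lagrangian $M$ equals $JTM$. The tangent projection immediately gives the stated formula for $(\nabla_XA)Y$, and the normal projection, after applying $-J$ to send $(\nabla_XB)Y$ back to $TM$ using $J^2=-\Id$, yields the formula for $(\nabla_XB)Y$. Besides the $Ph(X,Y)$ manipulation above, the only thing to watch is signs whenever $J$ flips a term between tangent and normal parts; otherwise the computation is mechanical.
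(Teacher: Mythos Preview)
Your proposal is correct and follows essentially the same route as the paper: both arguments expand $(\tilde\nabla_X P)Y$ (equivalently $\tilde\nabla_X(PY)-P\tilde\nabla_XY$) via Gauss--Weingarten and the decomposition $PY=AY+JBY$, invoke \eqref{eq:imp2} for the other side, and then separate tangent and normal parts using Lemma~\ref{lem:gnorm}. Your handling of $Ph(X,Y)=B S_{JX}Y-JAS_{JX}Y$ is exactly the step the paper encodes as ``$-PJS_{JX}Y$'', so there is no substantive difference.
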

\begin{proof}
We express equation~\eqref{eq:imp2} in terms of~$A$ and~$B$.
By  the Gauss and Weingarten formula and Lemma~\ref{lem:lagr} we get on one hand
\begin{align*}
 (\lcc_X P)Y &= \lcc_X AY + \lcc_X JBY - P\nabla_X Y - Ph(X,Y)\\
             &=\nabla_X AY + h(X,AY)+ J\lcc_X BY + G(X,BY)\\
             &\quad - A\nabla_X Y - JB\nabla_X Y - PJS_{JX}Y\\
             &= (\nabla_X A)Y + J(\nabla_X B)Y + Jh(X,BY) - BS_{JX}Y \\
             &\quad + h(X,AY) + JAS_{JX}Y + G(X,BY).
\end{align*}
On the other hand we have
\begin{align*}
\frac{1}{2}(JG(X,P&Y)+JPG(X,Y)) \\
        &= \frac{1}{2}( JG(X,AY) +G(X,BY) - AJG(X,Y)-JBJG(X,Y)).
\end{align*}
Using Lemma~\ref{lem:gnorm} we can compare the tangent and normal parts in equation~\eqref{eq:imp2}.
This gives us the covariant derivatives of~$A$ and~$B$.
\end{proof}

It would be interesting to ask whether it is possible to prove an existence and uniqueness theorem like for submanifolds of real space forms or lagrangian submanifolds of complex space forms. Although such a theorem would simplify some of the later proofs, it is outside the scope of the present paper.

For the Levi-Civita connection~$\nabla$ on~$M$ we introduce the functions~$\omega_{ij}^k$
satisfying~$\nabla_{E_i} E_j = \omega_{ij}^k  E_k$ and~$\omega_{ij}^k =- \omega_{ik}^j$, where we have used Einstein summation.
We write~$h_{ij}^k=\metri{h(E_i,E_j)}{JE_k}$.
The tensor~$h_{ij}^k$ is a totally symmetric tensor on the Lagrangian submanifold.
The covariant derivative on the nearly K\"ahler~$\nks$ will be denoted by~$\lcc$ as usual.
In the following, we will use equation~\eqref{eq:imp2} to obtain extra information on the~angles~$\theta_i$ and second
fundamental form~$h_{ij}^k$.
\begin{lemma}
 \label{lem:sff}
 The derivatives of the angles~$\theta_i$ give the components of the second fundamental form
 \[
   E_i(\theta_j) = -h_{jj}^i
 \]
 except~$h_{12}^3$. The second fundamental form and covariant derivative are related by
 \[
   h_{ij}^k \cos(\theta_j-\theta_k) = \Bigl(\frac{\sqrt{3}}{6}\e_{ij}^k-\omega_{ij}^k \Bigr)\sin(\theta_j-\theta_k),~\forall~j\neq k, ~\text{where}~ \e_{ij}^k:=\e_{ijk}.
 \]
\end{lemma}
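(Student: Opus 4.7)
The plan is to compute $(\nabla_{E_i} A)E_j$ and $(\nabla_{E_i} B)E_j$ in two different ways, and to extract the desired identities by comparing coefficients in the basis $\{E_1,E_2,E_3\}$.

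First I would use $AE_j = \cos(2\theta_j)E_j$ and $BE_j = \sin(2\theta_j)E_j$, combined with $\nabla_{E_i}E_j=\omega_{ij}^k E_k$, to differentiate directly. A short application of the product rule gives
\begin{align*}
(\nabla_{E_i}A)E_j &= -2\sin(2\theta_j)\,E_i(\theta_j)E_j + \bigl(\cos(2\theta_j)-\cos(2\theta_k)\bigr)\omega_{ij}^k E_k,\\
(\nabla_{E_i}B)E_j &= \phantom{-}2\cos(2\theta_j)\,E_i(\theta_j)E_j + \bigl(\sin(2\theta_j)-\sin(2\theta_k)\bigr)\omega_{ij}^k E_k.
\end{align*}
On the other hand, Lemma~\ref{lem:covAB} combined with the identities $S_{JE_i}E_j=h_{ij}^k E_k$, $Jh(E_i,E_j)=-h_{ij}^k E_k$, and $JG(E_i,E_j)=\tfrac{1}{\sqrt{3}}\e_{ijk}E_k$ from \eqref{eqn:3.12}, gives
\begin{align*}
(\nabla_{E_i}A)E_j &= h_{ij}^k\bigl(\sin(2\theta_j)+\sin(2\theta_k)\bigr)E_k + \tfrac{1}{2\sqrt{3}}\e_{ijk}\bigl(\cos(2\theta_j)-\cos(2\theta_k)\bigr)E_k,\\
(\nabla_{E_i}B)E_j &= -h_{ij}^k\bigl(\cos(2\theta_j)+\cos(2\theta_k)\bigr)E_k + \tfrac{1}{2\sqrt{3}}\e_{ijk}\bigl(\sin(2\theta_j)-\sin(2\theta_k)\bigr)E_k.
\end{align*}

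Next I would compare the coefficient of $E_j$ (i.e.\ the case $k=j$) in the two expressions for each of $\nabla A$ and $\nabla B$. The sums reduce respectively to
\[
\sin(2\theta_j)\bigl(E_i(\theta_j)+h_{ij}^j\bigr)=0,\qquad \cos(2\theta_j)\bigl(E_i(\theta_j)+h_{ij}^j\bigr)=0.
\]
Since $\sin^2(2\theta_j)+\cos^2(2\theta_j)=1$, at least one factor is nonzero, whence $E_i(\theta_j)=-h_{ij}^j=-h_{jj}^i$ by the total symmetry of $h_{ij}^k$. This yields the first claim.

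For the second claim I would compare the coefficient of $E_k$ with $k\neq j$, and convert the resulting trigonometric combinations via the standard sum-to-product identities: $\cos(2\theta_j)-\cos(2\theta_k)=-2\sin(\theta_j+\theta_k)\sin(\theta_j-\theta_k)$ and $\sin(2\theta_j)+\sin(2\theta_k)=2\sin(\theta_j+\theta_k)\cos(\theta_j-\theta_k)$ (and analogously for the $B$-equation, with $\sin(\theta_j+\theta_k)$ replaced by $\cos(\theta_j+\theta_k)$). After factoring out $\sin(\theta_j+\theta_k)$ from the $A$-identity and $\cos(\theta_j+\theta_k)$ from the $B$-identity, both collapse to the single relation
\[
h_{ij}^k\cos(\theta_j-\theta_k)=\Bigl(\tfrac{\sqrt{3}}{6}\e_{ij}^k-\omega_{ij}^k\Bigr)\sin(\theta_j-\theta_k).
\]
Since at every point one of $\sin(\theta_j+\theta_k),\cos(\theta_j+\theta_k)$ is nonzero, the identity holds pointwise. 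The only real obstacle is the bookkeeping of the trigonometric identities and the need to argue both cases in parallel so that the common factor can legitimately be cancelled; the rest is a mechanical expansion.
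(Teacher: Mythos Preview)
Your proof is correct and follows essentially the same underlying idea as the paper: both rest on the identity~\eqref{eq:imp2}. The paper applies~\eqref{eq:imp2} directly by expanding $2(\lcc_{E_i}P)E_j = JG(E_i,PE_j)+JPG(E_i,E_j)$ and reading off the six scalar equations, whereas you route the same computation through Lemma~\ref{lem:covAB} (which is itself the tangential/normal splitting of~\eqref{eq:imp2}) and then compare with the direct differentiation of $AE_j=\cos(2\theta_j)E_j$, $BE_j=\sin(2\theta_j)E_j$. This packaging is slightly more modular, since it reuses the already-proved formulas for $\nabla A$ and $\nabla B$ rather than re-expanding $(\lcc_X P)Y$ from scratch, but the content is identical and the final cancellation of $\sin(\theta_j+\theta_k)$ or $\cos(\theta_j+\theta_k)$ is exactly the argument the paper alludes to with ``the sines and cosines cannot be zero at the same time''.
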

\begin{proof}
We will not do all the calculations explicitly, instead we give
one calculation as an example. Choose~$X=Y=E_1$ in~\eqref{eq:imp2}.
Then the equation~$2(\lcc_{E_1}P)E_1 = JG(E_1,PE_1)$ gives
\begin{align*}
 -2 \bigl( h_{11}^1 + E_1(\theta_1)\bigr)  \sin(2 \theta_1) &= 0,   \\
  2 \bigl( h_{11}^1 + E_1(\theta_1)\bigr) \cos(2 \theta_1) &= 0, \\
 -2 \bigl( h_{11}^2  \cos(\theta_1-\theta_2) + \omega_{11}^2 \sin(\theta_1-\theta_2) \bigr) \sin(\theta_1+\theta_2) &=0,   \\
  2 \bigl( h_{11}^2  \cos(\theta_1-\theta_2) + \omega_{11}^2 \sin(\theta_1-\theta_2) \bigr) \cos(\theta_1+\theta_2) &=0,  \\
 -2 \bigl( h_{11}^3 \cos(\theta_1-\theta_3) + \omega_{11}^3\sin(\theta_1-\theta_3) \bigr) \sin(\theta_1+\theta_3) &= 0,   \\
  2 \bigl( h_{11}^3 \cos(\theta_1-\theta_3) + \omega_{11}^3\sin(\theta_1-\theta_3) \bigr) \cos(\theta_1+\theta_3) &= 0.
\end{align*}
Since the sines and cosines cannot be zero at the same time, we find that~$E_1(\theta_1)=-h_{11}^1$
and two expressions relating~$\omega_{ij}^k$ and~$h_{ij}^k$.
Doing the same calculations for~$X=E_i$ and~$Y=E_j$ with $i,j=1,2,3$, we get the lemma.
\end{proof}

Note that from Lemma \ref{lem:sff} we have that
$$E_i(\theta_j)=-h_{jj}^i.$$
Therefore we also have the compatibility conditions that
\begin{equation}\label{compatibility}
\begin{aligned}
-E_k(h_{jj}^i) + E_i(h_{jj}^k)&= [E_k,E_i](\theta_j)\\
&=\sum_{\ell=1}^3 (\omega_{ki}^\ell-\omega_{ik}^\ell) E_\ell(\theta_j)\\
&= \sum_{\ell=1}^3 (-\omega_{ki}^\ell+\omega_{ik}^\ell) h_{jj}^\ell.
\end{aligned}
\end{equation}
So we have six additional  independent equations. One can show, using Lemma~\ref{lem:sff}, that the above  equations are equivalent with six of the Codazzi equations. One does not obtain all the equations of Gauss and Codazzi this way, but the compatibility conditions for the~$\theta_i$ are easier to calculate.

\begin{remark}
We note that from Lemma \ref{lem:sff} and Lemma \ref{lem:sumzero}, we obtain that
$h_{11}^i+h_{22}^i+h_{33}^i=-E_i(\theta_1+\theta_2+\theta_3)=0,~\forall ~i=1,2,3$.
 Hence, we obtain a new proof of the fact that $M$ is minimal (see Lemma \ref{lem:gnorm}).
\end{remark}
Another consequence of Lemma~\ref{lem:sff} is

\begin{corollary}
\label{cor:totgeod}
Let~$M$ be a Lagrangian submanifold of the nearly K\"ahler~$\nks$. If $M$ is totally geodesic, then the angles~$\theta_1$,~$\theta_2$ and~$\theta_3$ are constant.
Conversely, if the angles are constant and~$h_{12}^3=0$, then $M$ is totally geodesic.
\end{corollary}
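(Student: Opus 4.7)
The plan is to extract the result directly from Lemma \ref{lem:sff}, which gives $E_i(\theta_j)=-h_{jj}^i$ and thus a tight dictionary between the first derivatives of the angle functions and the ``diagonal'' components of the second fundamental form in the frame $\{E_1,E_2,E_3\}$.

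\textbf{Forward direction.} Assume $M$ is totally geodesic, so $h\equiv 0$, and in particular $h_{jj}^i=0$ for all $i,j\in\{1,2,3\}$. By Lemma \ref{lem:sff} this means $E_i(\theta_j)=0$ for every $i,j$, and since $\{E_1,E_2,E_3\}$ is a local frame on an open dense subset of $M$, each angle function $\theta_j$ is locally constant there. By continuity of the $\theta_j$, constancy extends to all of~$M$.

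\textbf{Reverse direction.} Assume all three angle functions are constant and $h_{12}^3=0$. Constancy of the $\theta_j$ gives, via Lemma \ref{lem:sff}, $h_{jj}^i=-E_i(\theta_j)=0$ for every pair $i,j$. Since $h_{ij}^k$ is totally symmetric in its three indices, $h_{jj}^i=0$ for all $i,j$ already accounts for every component having at least one repeated index (in particular $h_{ii}^i=0$, $h_{ii}^j=h_{ij}^i=h_{ji}^i=0$). The only remaining independent component is the one with all distinct indices, which by total symmetry is $h_{12}^3=h_{13}^2=h_{23}^1$, and this vanishes by hypothesis. Hence $h\equiv 0$ and $M$ is totally geodesic.

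There is essentially no obstacle: the content of the corollary is purely an unpacking of the identity $E_i(\theta_j)=-h_{jj}^i$ combined with the total symmetry of $h$ that was recorded in \eqref{symmetryh}. The only thing worth noting is why the extra assumption $h_{12}^3=0$ is needed in the converse: the second identity in Lemma \ref{lem:sff} relates $h_{ij}^k$ (for $j\neq k$) to the connection coefficients $\omega_{ij}^k$, but it does not constrain the fully off-diagonal component $h_{12}^3$ at all when the $\theta_j$ are constant, so this single scalar must be prescribed separately.
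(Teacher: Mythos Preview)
Your proof is correct and follows exactly the approach the paper intends: the corollary is stated as an immediate consequence of Lemma~\ref{lem:sff}, and your argument simply unpacks the identity $E_i(\theta_j)=-h_{jj}^i$ together with the total symmetry of $h_{ij}^k$ from~\eqref{symmetryh}, just as you have done.
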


\begin{remark}
We must assume that~$h_{12}^3 =0$ in the converse statement of Corollary \ref{cor:totgeod}.
Examples~\ref{ex:l6}-\ref{ex:l8} in the next section show that this assumption is necessary.
\end{remark}

In the following, we give two other corollaries of Lemma~\ref{lem:sff}.
Lemma \ref{lem:equalangles} gives a sufficient condition for a Lagrangian submanifold in the nearly K\"ahler~$\nks$ to be
totally geodesic. Lemma \ref{lem:angles} gives a necessary condition and shows us that the converse
statement of Lemma \ref{lem:equalangles} also holds.
\begin{lemma}
\label{lem:equalangles}
If two of the angles are equal modulo~$\pi$, then the Lagrangian submanifold is totally geodesic.
\end{lemma}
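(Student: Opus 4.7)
The plan is to assume, without loss of generality, that $\theta_1 \equiv \theta_2 \pmod{\pi}$ and show that every component of the second fundamental form vanishes, so that $M$ is totally geodesic.

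First I would harvest the direct consequences of Lemma~\ref{lem:sff}. Since $\sin(\theta_1-\theta_2)=0$ while $\cos(\theta_1-\theta_2)=\pm 1$, taking $(j,k)=(1,2)$ and then $(j,k)=(2,1)$ in the second identity of Lemma~\ref{lem:sff} forces $h_{i1}^2 = h_{i2}^1 = 0$ for every $i\in\{1,2,3\}$. By the total symmetry of $h_{ij}^k$, these equations kill every component whose index multiset is $\{1,1,2\}$, $\{1,2,2\}$ or $\{1,2,3\}$; in particular $h_{12}^3 = 0$.

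Next I would exploit that $\theta_1-\theta_2$ is locally constant (so $E_i(\theta_1)=E_i(\theta_2)$) and that $\theta_3\equiv -2\theta_1\pmod{\pi}$ by Lemma~\ref{lem:sumzero} (so $E_i(\theta_3)=-2E_i(\theta_1)$). Combined with $E_i(\theta_j)=-h_{jj}^i$ this gives $h_{11}^i = h_{22}^i = -\tfrac{1}{2}h_{33}^i$ for all $i$. Together with the previous step, all of $h_{11}^1$, $h_{22}^2$, $h_{33}^1$ and $h_{33}^2$ now vanish, and the entire second fundamental form is pinned down by a single unknown $\lambda := h_{11}^3 = h_{22}^3$, with $h_{33}^3 = -2\lambda$.

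The main obstacle is to show $\lambda\equiv 0$. At any point where $\sin(\theta_1-\theta_3)=0$, the instance $(i,j,k)=(1,1,3)$ of Lemma~\ref{lem:sff} reads $\lambda\cos(\theta_1-\theta_3)=0$, so $\lambda=0$. On the open set where $\sin(\theta_1-\theta_3)\neq 0$ I would instead read off connection coefficients: Lemma~\ref{lem:sff} applied to $(i,j,k)=(1,2,3)$ and $(2,1,3)$, combined with $h_{12}^3=h_{21}^3=0$, yields $\omega_{12}^3=\tfrac{\sqrt{3}}{6}$ and $\omega_{21}^3=-\tfrac{\sqrt{3}}{6}$. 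Substituting these into the compatibility identity \eqref{compatibility} with $(j,i,k)=(3,1,2)$, and using $h_{33}^1=h_{33}^2=0$ together with $h_{33}^3=-2\lambda$, one obtains
\[
0 \;=\; -E_2(h_{33}^1)+E_1(h_{33}^2) \;=\; (\omega_{12}^3-\omega_{21}^3)\,h_{33}^3 \;=\; -\tfrac{2\sqrt{3}}{3}\lambda,
\]
so $\lambda=0$ on this set as well. Thus $h\equiv 0$ everywhere, and $M$ is totally geodesic.
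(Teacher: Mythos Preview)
Your argument is correct and, in its final step, a little cleaner than the paper's. Both proofs reduce the problem to showing that the single surviving component $\lambda=h_{11}^3$ vanishes after using Lemma~\ref{lem:sff} and minimality to kill everything else. The paper then combines the compatibility relation $(\omega_{12}^3-\omega_{21}^3)h_{11}^3=0$ with an extra Codazzi identity $(\tfrac{1}{\sqrt 3}+\omega_{21}^3-3\omega_{12}^3)h_{11}^3=0$, and argues by contradiction: if $h_{11}^3\neq 0$ these force $\omega_{12}^3=\omega_{21}^3=\tfrac{\sqrt 3}{6}$, which via Lemma~\ref{lem:sff} makes $\sin(\theta_1-\theta_3)=0$ and hence all three angles equal modulo~$\pi$, collapsing back to $h_{11}^3=0$.

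You bypass the Codazzi equation entirely. On the set where $\sin(\theta_1-\theta_3)\neq 0$ you read off $\omega_{12}^3=\tfrac{\sqrt 3}{6}$ and $\omega_{21}^3=-\tfrac{\sqrt 3}{6}$ directly from Lemma~\ref{lem:sff} (using $h_{12}^3=h_{21}^3=0$), so that $\omega_{12}^3-\omega_{21}^3=\tfrac{\sqrt 3}{3}\neq 0$ and the compatibility relation alone forces $\lambda=0$; the complementary set $\sin(\theta_1-\theta_3)=0$ is handled pointwise by the $(i,j,k)=(1,1,3)$ instance of Lemma~\ref{lem:sff}. This is a genuinely more economical route: one fewer tool (no Codazzi), no argument by contradiction, and a clean case split governed by $\sin(\theta_1-\theta_3)$.
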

\begin{proof}
 Without loss of generality we may assume that~$\theta_1=\theta_2 \mod \pi$. It follows from Lemma~\ref{lem:sff}
 that~$h_{11}^i= h_{22}^i$ and~$h_{12}^i=0$ for~$i=1,2,3$. Combining the equations, together with the symmetry of $h$, gives~$h_{11}^1= h_{22}^2=h_{11}^2=h_{22}^1=h_{12}^3=0$
 and by minimality also~$h_{33}^1$ and~$h_{33}^2$ vanish. The three  remaining components are related by~$h_{22}^3=h_{11}^3$
 and~$h_{33}^3=-2h_{11}^3$.
 The compatibility condition for $\theta_1$ with respect to $E_1$ and $E_2$ (take $k=j=1,~i=2$ in \eqref{compatibility}) gives~
 \begin{equation}\label{w123-1}
 (\omega_{12}^3-\omega_{21}^3)h_{11}^3=0.
  \end{equation}
 Now we use the Codazzi equation \eqref{codazzi} applied to $X=E_1$, $Y=E_2$, $Z=E_2$. As $\theta_1$ and $\theta_2$ are equal modulo $\pi$, the  term on the right hand side of \eqref{codazzi}vanishes and so we obtain by taking the component in the direction of $JE_2$ that
 \begin{equation}\label{w123-2}
 (\frac{1}{\sqrt{3}}+\omega_{21}^3-3\omega_{12}^3)h_{11}^3=0.
 \end{equation}
We claim that $h_{11}^3=0 $. If $h_{11}^3\neq 0$, then from \eqref{w123-1} and \eqref{w123-2}
we get $\omega_{21}^3=\omega_{12}^3=\frac{\sqrt{3}}{6}$. It follows from the second equation of Lemma \ref{lem:sff} that
$0=(-\frac{\sqrt{3}}{6}-\omega_{21}^3)\sin(\theta_1-\theta_3)$, taking into account that $\omega_{21}^3=\frac{\sqrt{3}}{6}$ we obtain that
$\sin(\theta_1-\theta_3)=0$, hence $\theta_1=\theta_3+a\pi$, where $a$ is a constant integer. Then using the first
equation of Lemma \ref{lem:sff}, we derive that $h_{33}^2=-E_3(\theta_3)=-E_3(\theta_1)=h_{11}^3$, but we have that $h_{33}^3=-2h_{11}^3$,
 so we get that $h_{11}^3=h_{33}^3=0$, which is a contradiction.
 Thus~$h_{11}^3=0$ and the
 submanifold is totally geodesic.
\end{proof}

\begin{lemma}
\label{lem:angles}
 Consider a totally geodesic Lagrangian
submanifold in the  nearly K\"ahler  $\nks$. After a possible permutation of the
angles and the choice of the angles $2\theta_i$ at an initial point
belonging to the interval $[0,2 \pi)$, we must have one of the
following possibilities:
\begin{equation*}
\begin{array}{lll}
&(1)\ \ (2\theta_1,2\theta_2,2\theta_3) =(\tfrac{4\pi}{3},\tfrac{4\pi}{3},\tfrac{4\pi}{3}) ,\ \ \
&(2)\ \ (2\theta_1,2\theta_2,2\theta_3) =(\tfrac{2\pi}{3},\tfrac{2\pi}{3},\tfrac{2\pi}{3}) ,\\
&(3)\ \ (2\theta_1,2\theta_2,2\theta_3) = (0,0,0),\
&(4)\ \ (2\theta_1,2\theta_2,2\theta_3) = (0,\pi,\pi),\\
&(5)\ \ (2\theta_1,2\theta_2,2\theta_3) =
(\tfrac{\pi}{3},\tfrac{\pi}{3},\tfrac{4\pi}{3}),\
&(6)\ \ (2\theta_1,2\theta_2,2\theta_3) =
(\tfrac{2\pi}{3},\tfrac{5\pi}{3},\tfrac{5\pi}{3}).
\end{array}
\end{equation*}
\end{lemma}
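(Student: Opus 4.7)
Since $M$ is totally geodesic, $h$ and hence $\nabla h$ vanish identically, and the Codazzi equation \eqref{codazzi} collapses to the purely algebraic identity
\begin{equation*}
g(AY,Z)BX - g(AX,Z)BY - g(BY,Z)AX + g(BX,Z)AY = 0
\end{equation*}
for all tangent $X,Y,Z$, after dropping the invertible $J$. Substituting $X=E_i$, $Y=E_j$, $Z=E_k$ and using the diagonal form $AE_i=\cos(2\theta_i)E_i$, $BE_i=\sin(2\theta_i)E_i$ from \eqref{eqn:3.12}, the identity simplifies via the sine-of-difference formula to
\begin{equation*}
\sin(2\theta_i - 2\theta_j)\bigl(\delta_{jk}E_i + \delta_{ik}E_j\bigr) = 0.
\end{equation*}
Choosing $k=j$ with $i \neq j$ extracts the key pointwise condition $\sin(2(\theta_i - \theta_j)) = 0$ for every pair, i.e., the doubled angles $2\theta_i$ are pairwise congruent modulo $\pi$.

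Combining this with Lemma \ref{lem:sumzero}, which gives $2\theta_1 + 2\theta_2 + 2\theta_3 \equiv 0 \pmod{2\pi}$, reduces the classification to an elementary enumeration. Writing $\alpha_i = 2\theta_i \in [0,2\pi)$, the pairwise congruence forces all three $\alpha_i$ into a single coset $\{\alpha, \alpha + \pi\}$ modulo $2\pi$; in particular three pairwise distinct values are impossible, so either all three agree, or exactly two agree and the third differs by $\pi \pmod{2\pi}$.

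If $\alpha_1 = \alpha_2 = \alpha_3 = \alpha$, then $3\alpha \in \{0, 2\pi, 4\pi\}$ forces $\alpha \in \{0, 2\pi/3, 4\pi/3\}$, yielding cases (3), (2), and (1). If (after permutation) $\alpha_1 = \alpha_2 = \alpha$ and $\alpha_3 \equiv \alpha + \pi \pmod{2\pi}$, then $3\alpha + \pi \in \{0, 2\pi, 4\pi\}$ gives $\alpha \in \{\pi, \pi/3, 5\pi/3\}$ and the triples $(0,\pi,\pi)$, $(\pi/3,\pi/3,4\pi/3)$, $(2\pi/3,5\pi/3,5\pi/3)$, which are cases (4), (5), and (6). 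The only real obstacle is the careful bookkeeping of residues modulo $2\pi$; the essential content is the one-line reduction of Codazzi to $\sin(2(\theta_i - \theta_j)) = 0$, after which Lemma \ref{lem:sumzero} immediately fixes the finite list.
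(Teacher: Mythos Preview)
Your proof is correct and follows essentially the same approach as the paper: both reduce the Codazzi equation \eqref{codazzi} with $h=0$ to the condition $\sin\bigl(2(\theta_i-\theta_j)\bigr)=0$ by evaluating on the frame $\{E_i\}$, and then combine this with Lemma~\ref{lem:sumzero} to enumerate the finitely many possibilities. Your write-up is slightly more explicit in the final bookkeeping of residues modulo $2\pi$, but the argument is the same.
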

\begin{proof}
The Codazzi equation \eqref{codazzi} gives~
$$
g(AY,Z)BX - g(AX,Z)BY = g(BY,Z)AX -g(BX,Z)AY.
$$
Taking~$X=E_i$ and~$Y=Z=E_j$, this yields~
$\sin\big(2(\theta_i-\theta_j)\big)=0$ for~$i\neq j$. So the
angles~$2\theta_i$ are equal up to an integer multiple of~$\pi$.
Together with Lemma~\ref{lem:sumzero} we deduce that the angles need to be constant, and therefore after a choice at an initial point one obtains the possibilities in the statement.
\end{proof}

\section{Examples of Lagrangian submanifolds in the nearly K\"ahler  ~$\nks$}
\label{sec:exlag}

In this  section we present eight examples (or families of examples) of Lagrangian submanifolds in the nearly K\"ahler  ~$\nks$.
Example 4.8 (a flat  Lagrangian torus) is a new example. Examples 4.1- 4.3 are the factors and the diagonal which were given by Sch\"afer and Smozcyk in \cite{schafer}. Examples 4.4-4.7 were constructed by Moroianu and Semmelmann in~\cite{ms}, where they studied
 generalized Killing spinors on the standard sphere $\mathbb{S}^3$, which turn out to be related to
  Lagrangian embeddings in the nearly K\"ahler  $\nks$. The first seven examples are immersions of round $3$-spheres or Berger spheres. On these $3$-sphere
$\mathbb{S}^3$ as the set of all the unit quaternions in
$\mathbb{H}$, we consider the left invariant tangent vector fields
$X_1,X_2,X_3$ on $\mathbb{S}^3$, which are given by
\begin{equation}\label{eqn:2.66}
 X_1(u)=u\,\mathbf{i},~
 X_2(u)=u\,\mathbf{j},~
 X_3(u)=-u\,\mathbf{k},
\end{equation}
where $u=x_1+x_2\mathbf{i}+x_3\mathbf{j}+x_4\mathbf{k}\in
\mathbb{S}^3$ is viewed as a unit quaternion, and
$\mathbf{i},\,\mathbf{j},\,\mathbf{k}$ are the imaginary units of
$\mathbb{H}$. Obviously, $X_1,X_2,X_3$ form
a basis of the tangent bundle $T\mathbb{S}^3$. We refer  to \cite{ZHDVW} for more details of
Examples 4.1-4.6.

\begin{example}
\label{ex:l1}
 Consider the immersion: $  f\colon \mathbb{S}^3 \to\nks \colon u\mapsto (u,1).$
$f$ is a totally geodesic Lagrangian immersion, $f(\mathbb{S}^3)$ is isometric to a round sphere.
The angles correspond to case (1) of Lemma \ref{lem:angles}.
\end{example}
\begin{example}
\label{ex:l2}
 Consider the immersion:$f\colon \mathbb{S}^3 \to\nks \colon u\mapsto (1,u).$
$f$ is a totally geodesic Lagrangian immersion,  $f(\mathbb{S}^3)$ is isometric to a round sphere. The angles correspond to  case (2) of Lemma \ref{lem:angles}.
\end{example}
\begin{example}
\label{ex:l3}
 Consider the immersion: $ f\colon \mathbb{S}^3 \to\nks \colon u\mapsto (u,u).$
 $f$ is a totally geodesic Lagrangian immersion, $f(\mathbb{S}^3)$ is isometric to a round sphere. The angles correspond to case (3) of Lemma \ref{lem:angles}.
\end{example}
\begin{example}
\label{ex:l4} Consider the immersion $f\colon \mathbb{S}^3\to\nks:$ $u\mapsto (u,ub)$ with~$b\in\im \H$, $\|b\|=1$. First we note that after an
isometry~$(p,q)\mapsto (pa^{-1}, q a^{-1})$ and a
reparametrization~$u\mapsto u a$ of the 3-sphere with~$a\in \im\H$,
the immersion becomes~$u\mapsto(u,uaba^{-1})$.
 We now choose~$a$ such that $aba^{-1}=\mathbf{i}$. This is always
 possible, because conjugation with a unit quaternion gives a
 rotation of~$\im \H$ and the group of rotations acts transitively
 on~$\im \H$. Therefore we may always consider the immersion:
         $f\colon \mathbb{S}^3 \to\nks \colon u\mapsto (u,u\mathbf{i}).$
$f$ is a totally geodesic Lagrangian immersion, $f(\mathbb{S}^3)$ is isometric to a Berger sphere.
The angles correspond to case (4) of Lemma
\ref{lem:angles}.

Note that by changing the parametrization of $\mathbb{S}^3$, we can
also reduce the potential immersion $f(u) = (u\mathbf{i},u)$ to the
preceding example.
\end{example}
\begin{example}
\label{ex:l5} Consider the immersion~$f\colon \mathbb{S}^3\to \nks: u\mapsto
(u,u^{-1}bu)$ with~$b\in\im \H$, $\|b\|=1$. After an isometry of the
 nearly K\"ahler ~$\nks$ and a reparametrization of $u$ as in the previous example,
we can always consider the immersion:
 \[
    f\colon \mathbb{S}^3 \to\nks \colon u\mapsto  (u^{-1},u\mathbf{i}u^{-1}).
 \]
$f$ is a totally geodesic  Lagrangian immersion, $f(\mathbb{S}^3)$ is isometric to a Berger sphere.
The angles correspond to  case (5) of
Lemma \ref{lem:angles}.
\end{example}
\begin{example}
\label{ex:l7} Consider the immersion: $ f\colon \mathbb{S}^3 \to\nks \colon u\mapsto  (u\mathbf{i}u^{-1},u^{-1}).$
$f$ is a totally geodesic  Lagrangian immersion, $f(\mathbb{S}^3)$ is isometric to a Berger sphere. The angles correspond to case (6)
of Lemma \ref{lem:angles}.
\end{example}
\begin{example}
\label{ex:l6}
 Consider the immersion~$f\colon \mathbb{S}^3\to \nks: u\mapsto (uau^{-1},ubu^{-1})$ with unit quaternions~$a,b\in\im \H$ and $\metric{a}{b}=0$.
 After an isometry of the nearly K\"ahler~$\nks$ and a reparametrization we can always consider the immersion
 \[
    f\colon \mathbb{S}^3\to \nks: u\mapsto (u\mathbf{i}u^{-1},u\mathbf{j}u^{-1}).
 \]
 For the tangent map we have~$df(X_1)=(0,2u\mathbf{k}u^{-1})$, $df(X_2)=(-2u\mathbf{k}u^{-1},0)$, $df(X_3)=2(-u\mathbf{j}u^{-1},u\mathbf{i}u^{-1})$.
The inner products are given by ~$g\bigl(df(X_i),df(X_j)\bigr)=\tfrac{16}{3}\delta_{ij}$, so~$f$ is an immersion of a round sphere.
We have that~$Jdf(X_1)=\tfrac{2}{\sqrt{3}}(2,u\mathbf{k}u^{-1})$, $Jdf(X_2)=\tfrac{2}{\sqrt{3}}(u\mathbf{k}u^{-1},-2)$ and
 $Jdf(X_3)=-\tfrac{2}{\sqrt{3}}(u\mathbf{j}u^{-1},u\mathbf{i}u^{-1})$. One can now easily verify that~$f$ is a Lagrangian immersion.
 We also have
 \begin{align*}
 &Pdf(X_1)=(2,0)=-\tfrac{1}{2}(df(X_1)-\sqrt{3}Jdf(X_1)),\\
  &Pdf(X_2)=(0,2)=-\tfrac{1}{2}(df(X_2)+\sqrt{3}Jdf(X_2)),\\ &Pdf(X_3)=-2(u\mathbf{j}u^{-1},-u\mathbf{i} u^{-1})=df(X_3).
  \end{align*}
   The angles~$2\theta_i$ are thus equal to~$0$, $\tfrac{2\pi}{3}$ and~$\tfrac{4\pi}{3}$. Therefore by
 Lemma~\ref{lem:angles} this immersion is not totally geodesic. Since the angles are constant, $h_{12}^3$ is the only non-zero
 component of the second fundamental form. This example shows that we cannot omit the condition~$h_{12}^3=0$
 in Corollary~\ref{cor:totgeod}.
\end{example}
\begin{example}
\label{ex:l8}
 Consider the immersion~$f: \mathbb R^3\to \nks: (u,v,w)\mapsto (p(u,w),q(u,v))$ where $p$ and $q$ are constant mean curvature tori in $\mathbb{S}^3$ given by
 \tiny
\begin{align*}
p(u,w)&=\left (\cos \left(\frac{\sqrt{3} u}{2}\right) \cos \left(\frac{\sqrt{3} w}{2}\right),\cos \left(\frac{\sqrt{3} u}{2}\right) \sin \left(\frac{\sqrt{3}
   w}{2}\right),\sin \left(\frac{\sqrt{3} u}{2}\right) \cos \left(\frac{\sqrt{3} w}{2}\right),\sin \left(\frac{\sqrt{3} u}{2}\right) \sin \left(\frac{\sqrt{3}
   w}{2}\right)\right),\\
  q(u,v)&=\frac{1}{\sqrt{2}}\left (\cos \left(\frac{\sqrt{3} v}{2}\right) \left(\sin \left(\frac{\sqrt{3} u}{2}\right)+\cos \left(\frac{\sqrt{3} u}{2}\right)\right),\sin
   \left(\frac{\sqrt{3} v}{2}\right) \left(\sin \left(\frac{\sqrt{3} u}{2}\right)+\cos \left(\frac{\sqrt{3} u}{2}\right)\right)\right . ,\\
   &\qquad \left .  \cos \left(\frac{\sqrt{3}
   v}{2}\right) \left(\sin \left(\frac{\sqrt{3} u}{2}\right)-\cos \left(\frac{\sqrt{3} u}{2}\right)\right),\sin \left(\frac{\sqrt{3} v}{2}\right) \left(\sin
   \left(\frac{\sqrt{3} u}{2}\right)-\cos \left(\frac{\sqrt{3} u}{2}\right)\right) \right).
\end{align*}
\normalsize
It follows that
\tiny
\begin{align*}
&f_u=\left (\left (-\frac{\sqrt{3}}{2}  \sin \left(\tilde{u}\right) \cos \left(\tilde{w}\right),-\frac{\sqrt{3}}{2}  \sin \left(\tilde{u}\right) \sin
   \left(\tilde{w}\right),\frac{\sqrt{3}}{2}  \cos \left(\tilde{u}\right) \cos \left(\tilde{w}\right),\frac{\sqrt{3}}{2}  \cos \left(\tilde{u}\right) \sin
   \left(\tilde{w}\right)\right ),\right.
   \\
   &\qquad \left . \left (\frac{1}{2} \sqrt{\frac{3}{2}} \cos \left(\tilde{v}\right) \left(\cos \left(\tilde{u}\right)-\sin \left(\tilde{u}\right)\right),\frac{1}{2}
   \sqrt{\frac{3}{2}} \sin \left(\tilde{v}\right) \left(\cos \left(\tilde{u}\right)-\sin \left(\tilde{u}\right)\right),\right . \right.\\
   &\qquad \left . \left. \frac{1}{2} \sqrt{\frac{3}{2}} \cos
   \left(\tilde{v}\right) \left(\sin \left(\tilde{u}\right)+\cos \left(\tilde{u}\right)\right),\frac{1}{2} \sqrt{\frac{3}{2}} \sin \left(\tilde{v}\right)
   \left(\sin \left(\tilde{u}\right)+\cos \left(\tilde{u}\right)\right)\right ) \right ),\\
&f_v=\left (\left  (0,0,0,0 \right ), \left (-\frac{1}{2} \sqrt{\frac{3}{2}} \sin \left(\tilde{v}\right) \left(\sin \left(\tilde{u}\right)+\cos \left(\tilde{u}\right)\right),\frac{1}{2}
   \sqrt{\frac{3}{2}} \cos \left(\tilde{v}\right) \left(\sin \left(\tilde{u}\right)+\cos \left(\tilde{u}\right)\right),\right. \right .\\
   &\qquad \left . \left .-\frac{1}{2} \sqrt{\frac{3}{2}} \sin
   \left(\tilde{v}\right) \left(\sin \left(\tilde{u}\right)-\cos \left(\tilde{u}\right)\right),\frac{1}{2} \sqrt{\frac{3}{2}} \cos \left(\tilde{v}\right)
   \left(\sin \left(\tilde{u}\right)-\cos \left(\tilde{u}\right)\right)\right ) \right ),\\
&f_w=\left ( \left (-\frac{\sqrt{3}}{2} \cos \left(\tilde{u}\right) \sin \left(\tilde{w}\right),\frac{\sqrt{3}}{2}\cos \left(\tilde{u}\right) \cos
   \left(\tilde{w}\right),-\frac{\sqrt{3}}{2} \sin \left(\tilde{u}\right) \sin \left(\tilde{w}\right),\frac{\sqrt{3}}{2}  \sin \left(\tilde{u}\right) \cos
   \left(\tilde{w}\right)\right ),\left ( 0,0,0,0\right ) \right),
\end{align*}
\normalsize
where in order to simplify expressions we have written $\tilde u = \tfrac{\sqrt{3}}{2} u$,  $\tilde v = \tfrac{\sqrt{3}}{2} v$ and $\tilde w =\tfrac{\sqrt{3}}{2} w$.
A straightforward computations gives that
\begin{align*}
&Jf_u=\left ( \frac{1}{2} \left (-\sin \left(\tilde{u}\right) \cos \left(\tilde{w}\right),-\sin \left(\tilde{u}\right) \sin \left(\tilde{w}\right),\cos
   \left(\tilde{u}\right) \cos \left(\tilde{w}\right),\cos \left(\tilde{u}\right) \sin \left(\tilde{w}\right)\right ) ,\right .\\
   &\qquad \left . \frac{1} {2 \sqrt{2}} \left (\cos \left(\tilde{v}\right) \left(\sin \left(\tilde{u}\right)-\cos \left(\tilde{u}\right)\right),\sin \left(\tilde{v}\right) \left(\sin
   \left(\tilde{u}\right)-\cos \left(\tilde{u}\right)\right),\right. \right.\\
   &\qquad \left. \left. -\cos \left(\tilde{v}\right) \left(\sin \left(\tilde{u}\right)+\cos
   \left(\tilde{u}\right)\right),-\sin \left(\tilde{v}\right) \left(\sin \left(\tilde{u}\right)+\cos \left(\tilde{u}\right)\right)\right ) \right), \\
&Jf_v=\left ( \left (-\sin \left(\tilde{u}\right) \sin \left(\tilde{w}\right),\sin \left(\tilde{u}\right) \cos \left(\tilde{w}\right), \cos \left(\tilde{u}\right) \sin
   \left(\tilde{w}\right),-\cos \left(\tilde{u}\right) \cos \left(\tilde{w}\right)\right),\right .
\\ &\qquad \left .  \frac{1} {2 \sqrt{2}} \left (-\sin \left(\tilde{v}\right) \left(\sin \left(\tilde{u}\right)+\cos \left(\tilde{u}\right)\right),\cos \left(\tilde{v}\right) \left(\sin
   \left(\tilde{u}\right)+\cos \left(\tilde{u}\right)\right),\right . \right .
\\ &\qquad \left . \left .\sin \left(\tilde{v}\right) \left(\cos \left(\tilde{u}\right)-\sin
   \left(\tilde{u}\right)\right),\cos \left(\tilde{v}\right) \left(\sin \left(\tilde{u}\right)-\cos \left(\tilde{u}\right)\right)\right )\right),\\
&Jf_w=\left( \frac{1}{2} \left (\cos \left(\tilde{u}\right) \sin \left(\tilde{w}\right),-\cos \left(\tilde{u}\right) \cos \left(\tilde{w}\right),\sin \left(\tilde{u}\right) \sin
   \left(\tilde{w}\right),-\sin \left(\tilde{u}\right) \cos \left(\tilde{w}\right)\right),\right .
\\ &\qquad  \left .\frac{1}{\sqrt{2}} \left (\sin \left(\tilde{v}\right) \left(\cos \left(\tilde{u}\right)-\sin \left(\tilde{u}\right)\right),\cos \left(\tilde{v}\right) \left(\sin
   \left(\tilde{u}\right)-\cos \left(\tilde{u}\right)\right),\right . \right .
\\ &\qquad \left . \left .\sin \left(\tilde{v}\right) \left(\sin \left(\tilde{u}\right)+\cos
   \left(\tilde{u}\right)\right),-\cos \left(\tilde{v}\right) \left(\sin \left(\tilde{u}\right)+\cos \left(\tilde{u}\right)\right)\right )\right ).
\end{align*}
From this we get that $f$ is a Lagrangian immersion and that  $\{f_u,f_v,f_w\}$ is an orthonormal basis of the tangent space. Hence it is a flat Lagrangian torus. By a lengthy but straightforward computation we also get that
\begin{align*}
Pf_u= f_u,~Pf_v = -\tfrac 12 f_v+ \tfrac{\sqrt{3}}{2}Jf_v,~Pf_w=  -\tfrac 12 f_w-\tfrac{\sqrt{3}}{2} Jf_w.
\end{align*}
The angles~$2\theta_i$ are therefore again  equal to~$0$, $\tfrac{2\pi}{3}$ and~$\tfrac{4\pi}{3}$. Therefore by
 Lemma~\ref{lem:angles} this immersion is also not totally geodesic. Since the angles are constant, $h_{12}^3$ is again the only non-zero
 component of the second fundamental form. This example is another example that shows that we cannot omit the condition~$h_{12}^3=0$
 in Corollary~\ref{cor:totgeod}.
\end{example}

\section{Lagrangian submanifolds of constant sectional curvature}
\label{sec:lscc}

In this section we classify all Lagrangian submanifolds  of constant  sectional curvature in the nearly K\"ahler~$\nks$ .
We will prove that those Lagrangian submanifolds of the nearly K\"ahler~$\nks$ are
congruent with one of the examples  of constant  sectional curvature listed in the previous section.
 As a corollary,
we obtain that the radius of a round Lagrangian sphere in the nearly K\"ahler $\nks$  can only be $\frac{2}{\sqrt{3}}$ or $\frac{4}{\sqrt{3}}$.
This improves Proposition~4.4 of~\cite{ms}.

In order to prove the classification, the first step is to find all the components~$h_{ij}^k$ of the second fundamental form. As
we have already obtained the complete classification of the totally geodesic Lagrangian
submanifolds in the nearly k\"ahler $\nks$ in \cite{ZHDVW} (see Theorem \ref{thmtg}), we can assume now that the immersion is not totally geodesic.
 Then from Lemma \ref{lem:equalangles} we may assume that all the angle functions are different (modulo~$\pi$).
 Therefore, we have that there exists a local
orthonormal frame $\{E_1,E_2,E_3\}$ on an open dense subset of $M$
such that \eqref{eqn:3.12} holds.

We note that it is not possible to follow the approach introduced by Ejiri for studying Lagrangian submanifolds  of constant  sectional curvature in the complex space forms (\cite{ejirilagr}) or in the nearly K\"ahler 6-sphere (\cite{ejiri}).
Indeed the Gauss equations give quadratic equations for the~$h_{ij}^k$ and it turns out that
these are not easy to solve directly without additional information. We therefore use another approach.
The next lemma  gives us linear equations for the components~$h_{ij}^k$.
The key idea is to calculate the expression $x$ given by
\begin{equation}
 \label{eq:cyclic}
  x=  3 \underset{{WXY}}{\mathfrak{S}} \left ((\nabla^2 h)(W,X,Y,Z) - (\nabla^2 h)(W,Y,X,Z)\right ),
\end{equation}
where~$\mathfrak{S}$ stands for the cyclic sum, in two different ways.
On one hand we can calculate this using the covariant derivative of the Codazzi equation \eqref{codazzi},
which tells us that $x$ equals the expression~\eqref{eq:longeq}.
On the other hand we can rewrite $x$ as
\begin{equation}
  x=  3\underset{{WXY}}{\mathfrak{S}} \left ((\nabla^2 h)(W,X,Y,Z) - (\nabla^2 h)(X,W,Y,Z)\right ),
\end{equation}
and then by applying the Ricci identity we obtain that this expression $x$ vanishes.
\medskip

More precisely, we have the following key lemma.

\begin{lemma}\label{keylemma}
Let~$M$ be a Lagrangian submanifold of constant  sectional curvature in the nearly K\"ahler~$\nks$  .
Then for all tangent vector fields~$W,X,Y,Z \in TM$ the expression
\begin{align}
\label{eq:longeq}
\begin{split}
\underset{{WXY}}{\mathfrak{S}}\Bigl\{ &\Bigl\{ \metri{JG(Y,W)}{AZ}+\frac{1}{2}\metri{JG(Y,Z)}{AW} -\frac{1}{2}\metri{JG(W,Z)}{AY} \Bigr. \\
       &\quad +\metri{h(W,Z)}{JBY} -\metri{h(Y,Z)}{JBW} \Bigr\} JBX \\
+\mbox{ }&\Bigl\{ \metri{JG(W,Y)}{BZ}+\frac{1}{2}\metri{JG(W,Z)}{BY} -\frac{1}{2}\metri{JG(Y,Z)}{BW}\Bigr. \\
       &\quad +\metri{h(W,Z)}{JAY} -\metri{h(Y,Z)}{JAW} \Bigr\} JAX \\
+\mbox{ }& \metri{AX}{Z}\Bigl\{ JBJG(W,Y) + \frac{1}{2}G(Y,BW) - \frac{1}{2}G(W,BY) \Bigr.\\
       &\qquad \qquad +\mbox{}\Bigl.h(W,AY) - h(Y,AW) \Bigr\} \\
+\mbox{ }& \metri{BX}{Z}\Bigl\{ -JAJG(W,Y) + \frac{1}{2}G(W,AY) - \frac{1}{2}G(Y,AW) \Bigr.\\
       &\qquad \qquad  +\mbox{}\Bigl.h(W,BY) - h(Y,BW) \Bigr\}\Bigl\}
\end{split}
\end{align}
is zero.
\end{lemma}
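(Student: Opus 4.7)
The strategy is to compute the quantity $x$ defined in \eqref{eq:cyclic} in two essentially different ways and then compare the results.

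For the first evaluation, let $T(X,Y,Z) := \tfrac{1}{3}\bigl(g(AY,Z)JBX - g(AX,Z)JBY - g(BY,Z)JAX + g(BX,Z)JAY\bigr)$ denote the right-hand side of the Codazzi equation \eqref{codazzi}, so that $(\nabla h)(X,Y,Z) - (\nabla h)(Y,X,Z) = T(X,Y,Z)$. Covariant differentiation in $W$ gives
\[
(\nabla^2 h)(W,X,Y,Z) - (\nabla^2 h)(W,Y,X,Z) = (\nabla_W T)(X,Y,Z).
\]
Expanding $\nabla_W T$ by the product rule, substituting the expressions for $\nabla_W A$ and $\nabla_W B$ from Lemma \ref{lem:covAB}, and differentiating the normal vectors $JBX$ and $JAX$ via \eqref{normalconnection} (which produces the $G$-contributions), then summing cyclically over $(W,X,Y)$, yields exactly \eqref{eq:longeq}. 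The factor $3$ in front of $x$ absorbs the $\tfrac{1}{3}$ from $T$.

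For the second evaluation, I first note the purely algebraic identity
\[
\underset{WXY}{\mathfrak{S}}\bigl((\nabla^2 h)(W,X,Y,Z) - (\nabla^2 h)(W,Y,X,Z)\bigr) = \underset{WXY}{\mathfrak{S}}\bigl((\nabla^2 h)(W,X,Y,Z) - (\nabla^2 h)(X,W,Y,Z)\bigr),
\]
which is immediate upon writing out the six terms on each side. The summands on the right are controlled by the Ricci identity applied to the normal-bundle-valued tensor $h$:
\[
(\nabla^2 h)(W,X,Y,Z) - (\nabla^2 h)(X,W,Y,Z) = R^\perp(W,X) h(Y,Z) - h(R(W,X)Y, Z) - h(Y, R(W,X)Z).
\]
Under the constant sectional curvature hypothesis, $R(W,X)Y = c\bigl(g(X,Y)W - g(W,Y)X\bigr)$, and the cyclic sum of $h(R(W,X)Y,Z)+h(Y,R(W,X)Z)$ collapses to zero by a direct telescoping that relies only on the symmetry of $h$. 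For the normal curvature term, formula \eqref{normalcurvature} rewrites $R^\perp(W,X)$ acting on the normal vector $h(Y,Z)$ in terms of $R(W,X)$ applied to the tangent vector $-Jh(Y,Z)$; after substituting constant sectional curvature, this becomes a linear combination of $JW$ and $JX$ whose coefficients are values of the cubic form $H(U,V,Z) := g(h(U,V),JZ)$. Because $H$ is totally symmetric by \eqref{symmetryh}, the cyclic sum $\underset{WXY}{\mathfrak{S}} R^\perp(W,X) h(Y,Z)$ also vanishes. Hence $x = 0$, so equating the two evaluations forces \eqref{eq:longeq} to be zero.

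The principal difficulty lies in the first evaluation: substituting the formulas of Lemma \ref{lem:covAB} produces a long list of $h$-, $G$- and $S$-terms, and only after cyclic symmetrization and repeated use of the identities $g(JU,V)=-g(U,JV)$ and $g(h(U,V),JZ)=g(h(U,Z),JV)$ do they regroup into the four bracketed expressions appearing in \eqref{eq:longeq}. The second evaluation, by contrast, is short and clean once the cyclic-sum rearrangement and the total symmetry of $H$ are recognised.
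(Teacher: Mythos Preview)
Your proposal is correct and follows essentially the same route as the paper: compute $x$ first by taking $\nabla_W$ of the Codazzi defect $T$, expanding with Lemma~\ref{lem:covAB} and \eqref{normalconnection}, and cyclically summing to obtain \eqref{eq:longeq}; then rewrite the cyclic sum so that the Ricci identity applies, and use constant sectional curvature together with the total symmetry of the cubic form $H$ to see that $x=0$. The paper organises the first step slightly differently---it names intermediate tensors $T_2$, $T_3$ and explicitly discards terms symmetric in two of $W,X,Y$ before summing---but this is bookkeeping, not a different idea.
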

\begin{proof}
As we have mentioned before, we  calculate expression~\eqref{eq:cyclic} in two different ways.

First, we  calculate $x$ using the covariant derivative of the Codazzi equation \eqref{codazzi},
which gives us the long expression~\eqref{eq:longeq}.
We denote the normal part~$(\tilde R(X,Y)Z)^\perp$ as~$T_1(X,Y,Z)$. This is the righthandside of the Codazzi equation \eqref{codazzi}. So we have that
\begin{equation}\label{5.4}
 \begin{aligned}
 3&((\nabla h)(X,Y,Z))-(\nabla h) (Y,X,Z))=3 T_1(X,Y,Z) \\
 &= \metri{AY}{Z}JBX - \metri{AX}{Z}JBY +\metri{BX}{Z}JAY - \metri{BY}{Z}JAX.
 \end{aligned}
\end{equation}
Using Lemma \ref{lem:lagr} and  \eqref{normalconnection},  the covariant derivative~$\nabla T_1$, where~$\nabla$ is the covariant derivative on~$M$, can be written as
\begin{equation}\label{5.5}
3 (\nabla T_1)(W,X,Y,Z) = T_2(W,X,Y,Z) - T_2(W,Y,X,Z),
\end{equation}
where
\begin{align}
\label{eq:T2}
 \begin{split}
 T_2(W,X,Y,Z) &=\metri{(\nabla_W A)Y}{Z}JBX +\metri{(\nabla_W B)X}{Z}JAY\\
 &\qquad +\metri{AY}{Z} G(W,BX) +g(BX,Z) G(W,AY)\\
 &\qquad +g(AY,Z) J(\nabla_W B)X  +g(BX,Z) J(\nabla_WA)Y.
\end{split}
\end{align}
By Lemma~\ref{lem:lagr} and Lemma \ref{lem:covAB} the tensor~$T_2$ can be expressed completely in terms of~$A$ and~$B$ in the following way:
\begin{equation} \label{intermediate}
\begin{split}
T_2(W,X,Y,Z) &=g(B S_{JW} Y,Z)JBX +g(h(W,BY),JZ)JBX)\\
&-\tfrac 12 g(G(W,AY),JZ) JBX +\tfrac 12 g(G(W,Y), JAZ) JBX\\
&-g(AY,Z) h(W,AX)-g(AY,Z) J A S_{JW}X\\
&+\tfrac 12 g(AY,Z)G(W,BX)- \tfrac 12 g(AY,Z) JBJ G(W,X)\\
&-g(h(W,AX),JZ) JAY -g(S_{JW} X,AZ) JAY\\
&-\tfrac 12 g(G(W,BX),JZ) JAY + \tfrac 12 g(G(W,X),JBZ) JAY\\
&+g(BX,Z) JB S_{JW}Y +g(BX,Z) h(W,BY)\\
&+\tfrac 12 g(BX,Z) G(W,AY) -\tfrac 12 g(BX,Z) JAJ G(W,Y).
\end{split}
\end{equation}

Now we can compute $x$. From \eqref{eq:cyclic}, \eqref{5.4} and \eqref{5.5}, we have that
\begin{equation*}\begin{split}
x&= T_2(W,X,Y,Z) + T_2(X,Y,W,Z)+ T_2(Y,W,X,Z)\\
&\qquad -T_2(W,Y,X,Z)- T_2(X,W,Y,Z)- T_2(Y,X,W,Z).
\end{split}
\end{equation*}
Therefore when we compute $x$ we can omit in \eqref{intermediate} all terms which are symmetric in two of the variables $W$, $X$, $Y$. So we get $x$ by omitting these terms in \eqref{intermediate} and by taking the cyclic sum of the difference of remainder of \eqref{intermediate} with itself with two variables interchanged.
Hence we can write
\begin{equation*}\begin{split}
x&= T_3(W,X,Y,Z) + T_3(X,Y,W,Z)+ T_3(Y,W,X,Z)\\
&\qquad -T_3(W,Y,X,Z)- T_3(X,W,Y,Z)- T_3(Y,X,W,Z),
\end{split}
\end{equation*}
where
\begin{equation*} \label{intermediate2}
\begin{split}
T_3(W,X,Y,Z) &= g(h(W,BY),JZ)JBX)
-\tfrac 12 g(G(W,AY),JZ) JBX \\
&+\tfrac 12 g(G(W,Y), JAZ) JBX-g(AY,Z) h(W,AX)\\
&+\tfrac 12 g(AY,Z)G(W,BX)- \tfrac 12 g(AY,Z) JBJ G(W,X)\\
&-g(h(W,AX),JZ) JAY -\tfrac 12 g(G(W,BX),JZ) JAY \\
&+ \tfrac 12 g(G(W,X),JBZ) JAY+g(BX,Z) h(W,BY)\\
&+\tfrac 12 g(BX,Z) G(W,AY) -\tfrac 12 g(BX,Z) JAJ G(W,Y).
\end{split}
\end{equation*}
From this we immediately get that $x$ equals the expression~\eqref{eq:longeq}.

Next, we can rewrite $x$ as
\begin{equation}
 \label{eq:cyclicx}
  x=  3\underset{{WXY}}{\mathfrak{S}} \left ((\nabla^2 h)(W,X,Y,Z) - (\nabla^2 h)(X,W,Y,Z)\right ).
\end{equation}
By the Ricci identity, we have that \begin{equation}
\label{eq:cyclic2}
 x= 3 \underset{{WXY}}{\mathfrak{S}} \left( R^\perp(W,X)h(Y,Z) - h\bigl(R(W,X)Y,Z\bigr) - h\bigl(Y,R(W,X)Z\bigr)\right).
\end{equation}
Equations~\eqref{tangentpart} and~\eqref{normalcurvature} give
\begin{align*}
 R^\perp (W,X) &h(Y,Z) \\
    &= R^\perp (W,X) JS_{JY}Z \\
    &= JR(W,X)S_{JY}Z + \frac{1}{3}\bigl(\metri{h(W,Y)}{JZ}JX - \metri{h(X,Y)}{JZ}JW\bigr).
\end{align*}
Since~$M$ has constant curvature the curvature tensor (we denote the constant by $c$), we have $R(X,Y)Z = c(g(Y,Z)X -g(X,Z)Y)$.
An easy calculation shows that $x$ vanishes.
This completes the proof of the lemma.
\end{proof}

We are now in a position to prove the   classification result.
We consider again the endomorphisms~$A$ and~$B$ that satisfy~$P|_{TM}=A+JB$
and take the orthonormal basis~$E_1,E_2,E_3$ such that~$AE_i = \lambda_i E_i$ and~$BE_i = \mu_i E_i$
for~$i=1,2,3$. In the notation of the previous sections~$\lambda_i= \cos 2\theta_i$
and~$\mu_i=\sin 2\theta_i$.
As sometimes the expressions in terms of~$\lambda_i$ and~$\mu_i$
are shorter, so  we will not always express equations in terms of the angles~$\theta_i$.
Taking into account of the properties of $G$, we may also assume that~$J G(E_1,E_2)=\frac{\sqrt{3}}{3} E_3$
by replacing~$E_3$ by~$-E_3$ if necessary. Thus we obtain that  $J G(E_i,E_j)=\tfrac{1}{\sqrt{3}} \varepsilon_{ijk} E_k$.
By taking~$X=E_1$,~$Y=E_2$ and~$Z=W=E_3$ in formula~\eqref{eq:longeq} in Lemma \ref{keylemma}, we obtain six equations, namely
\begin{align}
 &\bigl(\lambda_i(\lambda_j-\lambda_k)+\mu_i(\mu_j-\mu_k)\bigr) h_{kk}^j +
  \bigl(\lambda_k(\lambda_i-\lambda_j)+\mu_k(\mu_i-\mu_j)\bigr) h_{ii}^j =0, \label{eq:lm1}\\
 &\bigl(\lambda_i(\lambda_j-\lambda_k)+\mu_i(\mu_j-\mu_k)\bigr) h_{12}^3 =0, \label{eq:lm2}
\end{align}
for every positive permutation~$(ijk)$ of~$(123)$. Only four of the above equations are linearly independent.
\medskip

We now distinguish two cases: \textbf{Case~1: $h_{12}^3\neq 0$} and \textbf{Case~2: $h_{12}^3=0$.}
\medskip

\textbf{Case~1:} $\mathbf{h_{12}^3\neq 0}$. First we note that
\begin{align*}
\lambda_1(\lambda_2-\lambda_3) +\mu_1(\mu_2-\mu_3)&=\cos(2(\theta_1-\theta_2))-\cos(2(\theta_1-\theta_3))\\
&=-2 \sin (2 \theta_1 -\theta_2-\theta_3) \sin(\theta_3 -\theta_2).
\end{align*}
So from equation~\eqref{eq:lm2} we find
that
$\sin (2 \theta_1 -\theta_2-\theta_3) \sin(\theta_3 -\theta_2)$, $\sin (2 \theta_2 -\theta_3-\theta_1) \sin(\theta_1 -\theta_3)$ and $\sin (2 \theta_3 -\theta_2-\theta_3) \sin(\theta_2 -\theta_1)$ have to vanish. As the immersion is not totally geodesic, from Lemma \ref{lem:equalangles} we have that the angle functions are mutually different which in turn implies that for $i$ different from $j$, we have that $\sin(\theta_j-\theta_i)$ is different from $0$.
Hence
\begin{equation*}
\sin (2 \theta_1 -\theta_2-\theta_3)=\sin (2 \theta_2 -\theta_3-\theta_1)=\sin (2 \theta_3 -\theta_2-\theta_3)=0.
\end{equation*}
So $(2 \theta_1 -\theta_2-\theta_3)$ is a multiple of $\pi$.
By Lemma~\ref{lem:sumzero}, $( \theta_1 +\theta_2+\theta_3)$ is also a multiple of  $\pi$. Hence $\theta_1$ is a multiple of  $\tfrac{\pi}{3}$. A same argument can be applied for the other angles $\theta_2$ and $\theta_3$.
As  the immersion is not totally geodesic,  from Lemma \ref{lem:equalangles} we have that no two angle functions are the same and therefore the angles must be different modulo~$\pi$. So the only possibility for the angles are
~$\mathbf{0}$, $\mathbf{\tfrac{\pi}{3}}$ and~$\mathbf{\tfrac{2\pi}{3}}$.

Since all the angles~$\theta_i$ are constant all the~$h_{jj}^i$ are zero
except~$h_{12}^3$ by Lemma~\ref{lem:sff}.
By~Lemma~\ref{lem:sff} it now follows that all the connection coefficients~$\omega_{ij}^k$ are zero except the cases that $i,j,k$ are all different.
These non-zero coefficients~$\omega_{ij}^k$ can be written in terms of~$h_{12}^3$ in the following way
\begin{equation}\omega_{12}^3= \omega_{23}^1 = \omega_{31}^2= \tfrac{\sqrt{3}}{3} h_{12}^3 + \tfrac{\sqrt{3}}{6}.\end{equation}
Note also that from  the Gauss equation \eqref{gauss} it follows that the constant curvature $c$ is related to the second fundamental form
by
\begin{equation}\label{ce2-1}
\begin{aligned}
c E_2&= R(E_2,E_1)E_1\\
&=(\tfrac{5}{12}-\tfrac{1}{6}) E_2+[S_{JE_2},S_{JE_1}] E_1\\
&= \tfrac 14 E_2-S_{JE_1} (h_{12}^3 E_3)\\
&=\tfrac 14 E_2 -(h_{12}^3)^2 E_2.
\end{aligned}
\end{equation}
This implies that $h_{12}^3$ and therefore also $\omega_{12}^3$, $\omega_{23}^1$ and $\omega_{31}^2$ are all constants. So computing the curvature by the definition we have that
\begin{equation}\label{ce2-2}
\begin{aligned}
c E_2 &=R(E_2,E_1)E_1 \\
&=\nabla_{E_2} \nabla_{E_1} E_1 - \nabla_{E_1} \nabla_{E_2} E_1 -\nabla_{[E_2,E_1]}E_1\\
&=-\omega_{21}^3 \omega_{13}^2 E_2 -(\omega_{21}^3-\omega_{12}^3) \omega_{31}^2 E_2\\
&=(\omega_{12}^3)^2 E_2\\
&=(\tfrac{\sqrt{3}}{3} h_{12}^3 + \tfrac{\sqrt{3}}{6})^2 E_2.
\end{aligned}
\end{equation}
Comparing both expressions \eqref{ce2-1} and \eqref{ce2-2}, we get that~$8 (h_{12}^3)^2+2h_{12}^3=1$, which implies that ~$h_{12}^3=\tfrac{1}{4}$ or~$-\tfrac{1}{2}$.
In the following, we will discuss two subcases of case 1 respectively:
\textbf{Case~1a: $h_{12}^3=\tfrac{1}{4}$} and \textbf{Case~1b: $h_{12}^3=-\tfrac{1}{2}$}.

\textbf{Case~1a:} $\mathbf{h_{12}^3=\tfrac{1}{4}}$. In this case, we have that ~$\omega_{12}^3=\omega_{23}^1=\omega_{31}^2=\frac{\sqrt{3}}{4}$
and the sectional curvature is equal to~$\tfrac{3}{16}$.
\medskip

In the next theorem we will prove that in this  case (\textbf{Case~1a})   the submanifold $M$  is locally congruent with
the immersion in Example~\ref{ex:l6}.
In order to prove this, we first recall that the Berger sphere
can be constructed by looking at $\mathbb{S}^3$ as a hypersurface of the quaternions. As before we take the frame~$X_1(u)=u\mathbf{i}$, $X_2(u)=u\mathbf{j}$, $X_3(u)=-u\mathbf{k}$ of left invariant vector fields. It follows by a straightforward calculation that
$$[X_1,X_2]=-2 X_3,\quad [X_2,X_3]=-2 X_1,\quad [X_3,X_1]=-2 X_2.$$
We now define a new metric $g_b$, depending on two constants $\tau$ and $\kappa$ on $\mathbb{S}^3$ by
\begin{equation*}
g_b(X,Y)=\frac{4}{\kappa} \left (\langle X,Y\rangle    +(\frac{4 \tau^2}{\kappa}-1) \langle  X,X_1\rangle   \langle  Y,X_1\rangle   \right ).
\end{equation*}
This implies that the vector fields $E_1=\tfrac{\kappa}{4 \tau} X_1$, $E_2 = \tfrac{\sqrt{\kappa}}{2} X_2$ and  $E_3 = \tfrac{\sqrt{\kappa}}{2} X_3$ form an orthonormal basis of the tangent space with respect to $g_b$. It follows immediately from the Koszul formula that $\nabla_{E_i}E_i = 0$ and that
\begin{equation}\label{nablaei}
\begin{aligned}
&\nabla_{E_2} E_3 = -\tau E_1, \qquad &&\nabla_{E_2} E_1 = \tau E_3,\\
&\nabla_{E_3} E_2 = \tau E_1, \qquad &&\nabla_{E_3} E_1 = -\tau E_2,\\
&\nabla_{E_1} E_2 =(\tau -\frac{\kappa}{2 \tau}) E_3, \qquad &&\nabla_{E_1} E_3 = (-\tau +\frac{\kappa}{2 \tau})E_2.
\end{aligned}
\end{equation}
Note that the following theorem of \cite{classification}
which can be proved similarly to the local version of the Cartan-Ambrose-Hicks theorem (cf. the proof of Theorem 1.7.18 of \cite{wolf}),
then shows that a manifold admitting such vector fields is locally isometric with a Berger sphere.
\begin{proposition} \label{propberger} Let $M^n$ and $\tilde M^n$ be Riemannian manifolds with Levi-Civita connections $\nabla$ and $\tilde \nabla$. Suppose that there exists constant $c_{ij}^k$, $i,j,k \in \{1,\dots, n\}$ such that for all $p \in M$ and $\tilde p \in \tilde M$ there exist orthonormal frame fields $\{E_1,\dots, E_n\}$ around $p$ and $\{\tilde E_1,\dots, \tilde E_n\}$ around $\tilde p$ such that $\nabla_{E_i} E_j = \sum_{k=1}^n c_{ij}^k E_k$ and $\nabla_{\tilde E_i} \tilde E_j = \sum_{k=1}^n c_{ij}^k \tilde E_k$. Then for every point $p \in M$ and $\tilde p \in \tilde M$ there exists a local isometry which maps a neighborhood of $p$ onto a neighborhood of $\tilde p$ and $E_i$ on $\tilde E_i$.
\end{proposition}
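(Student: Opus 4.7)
The plan is to follow the standard proof of the local Cartan-Ambrose-Hicks theorem. Fix $p\in M$ and $\tilde p\in\tilde M$ with their compatible frames, and let $L\colon T_pM\to T_{\tilde p}\tilde M$ be the linear isometry determined by $L(E_i(p))=\tilde E_i(\tilde p)$. Define the candidate isometry as $\phi=\exp_{\tilde p}\circ L\circ\exp_p^{-1}$ on a normal neighborhood of $p$. The content of the proof is then to verify, via a Jacobi field comparison along corresponding radial geodesics, that $\phi$ is an isometry and that $d\phi$ carries $E_i$ to $\tilde E_i$.

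The hypothesis enters through the following computation. From $\nabla_{E_i}E_j=\sum_k c_{ij}^k E_k$ one obtains $[E_i,E_j]=\sum_k(c_{ij}^k-c_{ji}^k)E_k$, which has constant coefficients. Substituting into
\[
R(E_i,E_j)E_k=\nabla_{E_i}\nabla_{E_j}E_k-\nabla_{E_j}\nabla_{E_i}E_k-\nabla_{[E_i,E_j]}E_k,
\]
and using that $E_p(c_{ij}^k)=0$, the frame components of $R$ become fixed universal polynomials in the data $c_{ij}^k$. An easy induction on $r$ then shows that the frame components of $\nabla^r R$ are also universal polynomials in the $c_{ij}^k$: each covariant differentiation of a constant-in-frame tensor produces only terms of the form ``constant$\,\cdot\, c_{pj}^\ell$'', again constant in the frame. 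Performing the identical calculation on $\tilde M$ yields the same constants, so $L$ intertwines $R|_p,\nabla R|_p,\nabla^2R|_p,\ldots$ with their tilded counterparts at $\tilde p$.

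This is precisely the input needed to run the Jacobi field argument: along $\gamma(t)=\exp_p(tv)$, parallel transport of a frame produces an ODE for Jacobi fields whose coefficients are built from $R$ and, after repeated differentiation, from $\nabla^rR$; the analogous setup along $\tilde\gamma(t)=\exp_{\tilde p}(tLv)$ produces the very same ODE, so $\phi$ carries Jacobi fields to Jacobi fields and is therefore an isometry. Finally, $d\phi(E_i)$ and $\tilde E_i$ agree at $\tilde p$ and both satisfy the first-order system $\tilde\nabla_{X_i}X_j=\sum_k c_{ij}^k X_k$ (for the former, because $\phi$ is an isometry and $\nabla_{E_i}E_j=\sum_k c_{ij}^k E_k$), so uniqueness for this ODE system gives $d\phi(E_i)=\tilde E_i$ in a neighborhood of $\tilde p$.

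The only genuine obstacle is showing that the full curvature jet, rather than just the curvature itself, is universally determined by the $c_{ij}^k$; once this is in place the remainder of the argument is a direct transcription of the proof of Theorem~1.7.18 in Wolf's book, which we do not reproduce.
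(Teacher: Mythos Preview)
Your proposal is correct and follows precisely the route the paper indicates: the paper does not give its own proof but cites \cite{classification} and remarks that the statement ``can be proved similarly to the local version of the Cartan--Ambrose--Hicks theorem (cf.\ the proof of Theorem~1.7.18 of \cite{wolf})'', which is exactly the Jacobi-field comparison argument you outline. One small refinement: you do not actually need the full curvature jet $\nabla^r R$ at $p$; since the geodesic equation and the curvature components written in the $E_i$-frame involve only the constants $c_{ij}^k$, the curvature along corresponding radial geodesics already matches under parallel transport, which is the sole input the CAH argument requires.
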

The previous proposition can of course be also applied in case that $\kappa=4 \tau^2$. In that case we simply have a regular sphere  of constant  sectional curvature.

\begin{theorem}\label{thmeg16} Let $M$ be a Lagrangian submanifold of  the nearly K\"ahler~$\nks$. Assume that there exists a local orthonormal frame as in \textbf{Case~1a}.
Then $M$ is locally congruent with the immersion  $f\colon \mathbb{S}^3\to \nks: u\mapsto  (u\mathbf{i}u^{-1},u\mathbf{j}u^{-1})$, which is Example \ref{ex:l6}.
\end{theorem}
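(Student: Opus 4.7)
The plan has four stages: check that Example~\ref{ex:l6} realizes Case~1a, invoke Proposition~\ref{propberger} to fix the intrinsic isometry type, use the ambient isometry group of the nearly K\"ahler $\nks$ to align a base point and an adapted frame, and finally close the argument by ODE rigidity.

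First, I would start from the explicit formulas for $df(X_i)$, $J\,df(X_i)$ and $P\,df(X_i)$ displayed in Example~\ref{ex:l6} and pass to the orthonormal frame $\tilde E_i = \tfrac{\sqrt{3}}{4}\,df(X_i)$. A direct computation should verify that $\tilde E_1,\tilde E_2,\tilde E_3$ diagonalize $A$ and $B$ with angles $2\tilde\theta_i$ drawn from $\{0,\tfrac{2\pi}{3},\tfrac{4\pi}{3}\}$, that the sole non-zero component of the second fundamental form is $\tilde h_{12}^3 = \tfrac{1}{4}$, and that the connection coefficients are $\tilde\omega_{12}^3 = \tilde\omega_{23}^1 = \tilde\omega_{31}^2 = \tfrac{\sqrt{3}}{4}$. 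These match the structure constants obtained for $M$ in Case~1a. By Proposition~\ref{propberger} applied to these common constants there is a local Riemannian isometry $\varphi\colon U\subset M \to \widetilde U\subset f(\mathbb{S}^3)$ with $d\varphi(E_i) = \tilde E_i$; in particular $M$ is a round $3$-sphere of curvature $\tfrac{3}{16}$.

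Next, I would use the isometry group of $(\nks,g,J)$, consisting of the maps $(p,q)\mapsto(apc^{-1},bqc^{-1})$ (which, as recalled earlier, also preserve $P$), to align a chosen base point $p_0\in U$ with $\varphi(p_0)$ and the frame $\{E_i(p_0)\}$ with $\{\tilde E_i(\varphi(p_0))\}$. Transitivity on $\nks$ handles the base point, while at a fixed point the stabilizer $\mathrm{SO}(3)$ acts by simultaneous conjugation on $\mathrm{Im}\,\H\oplus\mathrm{Im}\,\H$, preserving the $\pm 1$-eigenspaces of $P$; a dimension count then shows this action is transitive on the set of Lagrangian orthonormal frames with the three given distinct angles, up to a discrete group of permutations that can be absorbed by relabeling the $E_i$. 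After replacing the inclusion $\iota_M$ by $\Phi\circ\iota_M$ for the resulting ambient isometry $\Phi$, both $\iota_M$ and $f\circ\varphi$ become Lagrangian immersions of $U$ into $\nks$ that coincide to first order at $p_0$ and whose constant structure functions $\omega_{ij}^k$, $h_{ij}^k$, $\theta_i$ agree.

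Finally, I would transfer the problem to the ambient Euclidean space $\H\times\H$ via Lemma~\ref{lem:connection}. There, the Gauss and Weingarten formulas together with the covariant derivatives of $P$, $J$, $G$, $A$, $B$ (Lemmas~\ref{lem:levicivita}, \ref{lem:propP}, \ref{lem:covAB} and equations~\eqref{eq:G}--\eqref{eq:nablaG}) combine into a closed first-order ODE system for the immersion together with its orthonormal frame, whose coefficients are constant in Case~1a. Because the initial data of $\iota_M$ and $f\circ\varphi$ at $p_0$ coincide, standard ODE uniqueness forces $\iota_M = f\circ\varphi$ on $U$. The hardest part is this last step: since no Bonnet-type theorem for Lagrangian immersions into a nearly K\"ahler manifold is available (as the authors explicitly note), one has to check by hand that the structure equations really do close into a rank-sufficient ODE; the constancy of $\omega_{ij}^k$, $h_{ij}^k$ and $\theta_i$ in Case~1a makes this tractable but requires careful bookkeeping of the ambient tensors $P$, $J$, $G$ along the moving frame.
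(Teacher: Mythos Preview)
Your approach is essentially the paper's: both invoke Proposition~\ref{propberger} to fix the intrinsic model, pass to the Euclidean connection~$\nabla^E$ via Lemma~\ref{lem:connection}, and conclude by ODE uniqueness after normalizing initial data by an ambient isometry. The difference is only one of explicitness. Where you describe an abstract closed first-order system for the moving frame, the paper writes $f=(p,q)$ and $df(E_i)=(p\alpha_i,q\beta_i)$ with $\alpha_i,\beta_i\in\mathrm{Im}\,\H$; the $Q$-eigenvalue relations $QE_2=E_2$, $QE_3=-E_3$, $QE_1=-\sqrt{3}JE_1$ force $\alpha_2=\beta_3=0$, $\beta_1=\alpha_1$, and the expressions for $\nabla^E_{E_i}E_j$ then give a concrete linear system for $\alpha_1,\beta_2,\alpha_3$ (e.g.\ $X_1(\alpha_1)=2\alpha_3$, together with the algebraic constraint $\beta_2\times\alpha_1=\tfrac{\sqrt{3}}{2}\alpha_3$), which the conjugation maps of Example~\ref{ex:l6} are verified by hand to solve.

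Your alignment step via the point-stabilizer $SO(3)$ is exactly the paper's choice of a unit quaternion $h$ rotating the initial triple $(\alpha_1,\beta_2,\alpha_3)(1)$ onto a standard orthonormal basis of $\mathrm{Im}\,\H$. One remark: a bare dimension count only gives an open orbit, not transitivity. The cleaner justification (implicit in the paper) is that the $Q$-relations above reduce a Case~1a frame at a point to an \emph{oriented} orthonormal triple in $\mathrm{Im}\,\H$ (orientation fixed by $\beta_2\times\alpha_1=\tfrac{\sqrt{3}}{2}\alpha_3$, equivalently by $JG(E_1,E_2)=\tfrac{1}{\sqrt{3}}E_3$), on which $SO(3)$ acts simply transitively.
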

\begin{proof}

We have that
$$\omega_{ij}^k =\tfrac{\sqrt{3}}{4} \epsilon_{ij}^k$$
and the only non vanishing component of the second fundamental form is $$g(h(E_1,E_2), JE_3)= \tfrac{1}{4}.$$
This implies immediately that $M$ is congruent with a space of constant sectional curvature $\tfrac{3}{16}$.
Moreover, from the beginning of the
 discussion about \textbf{Case 1}, we know that the angle functions are given by $(2 \theta_1,2 \theta_2,2 \theta_3)=(0,\tfrac{2\pi}{3}, \tfrac{4\pi}{3})$.
 So we can find a local basis such that $\sqrt{3} JG(E_1,E_2)=E_3$ and
\begin{align*}
PE_1=E_1,~
PE_2=-\tfrac 12 E_2 +\tfrac{\sqrt{3}}{2} JE_2,~
PE_3=-\tfrac 12 E_3 -\tfrac{\sqrt{3}}{2} JE_3.
\end{align*}
From this and \eqref{eq:Q} it follows that
\begin{align*}
QE_1=-\sqrt{3} JE_1,~
QE_2= E_2,~
QE_3= -E_3.
\end{align*}
Applying Proposition \ref{propberger} and comparing with \eqref{nablaei} (take $\kappa=\frac{3}{4},~\tau=\frac{\sqrt{3}}{4}$), we  have that  we can identify $M$ with $\mathbb{S}^3$, with a proportional metric and that we may assume that
\begin{align*}
E_1 = -\tfrac{\sqrt{3}}{4} X_3,~
E_2 = -\tfrac{\sqrt{3}}{4} X_1,~
E_3 = -\tfrac{\sqrt{3}}{4} X_2.
\end{align*}

We now write the immersion $f= (p,q)$ and $df(E_i)= D_{E_i} f = (p \alpha _i, q \beta_i)$ where $\alpha_i,\beta_i$ are imaginary quaternions. In view of the above properties of $Q$, it immediately follows that $\beta_1= \alpha_1$, $\alpha_2 = 0$ and $\beta_3=0$.

Moreover using the expression for $P$ and the fact that  $JG(E_1,E_2)=\tfrac{\sqrt{3}}{3} E_3$ we have that $\ne_{E_1}E_1=\ne_{E_2}E_2=\ne_{E_3}E_3=0$ and
\begin{align*}
&\ne_{E_1}E_2 =0,~\ne_{E_1} E_3=0,\\
&\ne_{E_2}E_1 = - \tfrac{\sqrt{3}}{2} E_3=-\tfrac{\sqrt{3}}{2} (p \alpha_3,0),\\
& \ne_{E_2} E_3=\tfrac{\sqrt{3}}{4}(E_1-QE_1)= \tfrac{\sqrt{3}}{2} (p\alpha_1,0),\\
&\ne_{E_3}E_2 = -\tfrac{\sqrt{3}}{4}(E_1+QE_1)= -\tfrac{\sqrt{3}}{2} (0,q \alpha_1),\\
& \ne_{E_3}E_1 =  \tfrac{\sqrt{3}}{2} E_2=\tfrac{\sqrt{3}}{2} (0,q \beta_2).
\end{align*}

From the relation between the nearly K\"ahler metric and the  usual Euclidean product metric  (see \eqref{eq:metric}), we  have that  $E_1,E_2,E_3$ are orthogonal with respect to the induced Euclidean product metric and that their lengths are given by
$$\langle  E_1,E_1\rangle   =\tfrac 32, \qquad  \langle  E_2,E_2\rangle   =\langle  E_3,E_3\rangle   =\tfrac 34.$$
This in turn implies that $\alpha_1,\beta_2,\alpha_3$ are mutually orthogonal imaginary quaternions and
$$\vert \alpha_1 \vert^2 =\tfrac 34,  \qquad   \vert \beta_2 \vert^2=  \vert \alpha_3 \vert^2=\tfrac 34.$$

On the other hand, from
$$D_{E_j} D_{E_i} f = (p \alpha_j \alpha_i + p E_j(\alpha_i),
q \beta_j \beta_i + q E_j(\beta_i)),$$
it follows that
$$\ne_{E_j}E_i = (p (\alpha_j \times \alpha_i+E_j(\alpha_i)), q (\beta_j \times \beta_i +E_j(\beta_i)).$$
Hence substituting $\alpha_2=0$, $\beta_1=\alpha_1$ and $\beta_3 =0$ it follows
that
$$\beta_2 \times \alpha_1 = \tfrac{\sqrt{3}}{2} \alpha_3,$$
as well as
\begin{alignat*}{3}
& E_2(\alpha_1)=-\tfrac{\sqrt{3}}{2} \alpha_3, \qquad &&E_3(\alpha_1)=\tfrac{\sqrt{3}}{2} \beta_2,
 \qquad &&E_1(\alpha_1) = 0,\\
& E_2(\beta_2)=0, \qquad &&E_3(\beta_2)=-\tfrac{\sqrt{3}}{2} \alpha_1,
 \qquad &&E_1(\beta_2) = \tfrac{\sqrt{3}}{2} \alpha_3,\\
 & E_2(\alpha_3)=\tfrac{\sqrt{3}}{2} \alpha_1, \qquad &&E_3(\alpha_3)=0,
 \qquad &&E_1(\alpha_3) =-\tfrac{\sqrt{3}}{2} \beta_2.
\end{alignat*}
In terms of the standard vector fields $X_1,X_2,X_3$ this gives
\begin{alignat*}{3}
& X_1(\alpha_1)=2 \alpha_3, \qquad &&X_2(\alpha_1)=- 2\beta_2,
 \qquad &&X_3(\alpha_1) = 0,\\
& X_1(\beta_2)=0, \qquad &&X_2(\beta_2)=2 \alpha_1,
 \qquad &&X_3(\beta_2) = -2 \alpha_3,\\
 & X_1(\alpha_3)=-2 \alpha_1, \qquad &&X_2(\alpha_3)=0,
 \qquad &&X_3(\alpha_3) = 2 \beta_2.
\end{alignat*}

We can choose a rotation (unitary quaternion $h$) such that
\begin{align*}
 \beta_2(1) =\tfrac{\sqrt{3}}{2} h\mathbf{i}h^{-1},~
 \alpha_3(1) = \tfrac{\sqrt{3}}{2} h\mathbf{j}h^{-1},~
 \alpha_1(1) =-\tfrac{\sqrt{3}}{2} h\mathbf{k}h^{-1},
\end{align*}
and we can pick the initial conditions such that $f(1)= (h\mathbf{i}h^{-1},h\mathbf{j}h^{-1})$.
As the differential equations for $\alpha_i$, $\beta_i$, $p$ and $q$ are linear systems of differential equations with fixed initial conditions we can apply a standard uniqueness theorem. It is therefore  sufficient to give a solution which satisfies the above system with the given initial conditions.

We  have that
\begin{align*}
 \beta_2(u) =\tfrac{\sqrt{3}}{2} hu\mathbf{i}u^{-1}h^{-1},~
 \alpha_3(u) = \tfrac{\sqrt{3}}{2} hu\mathbf{j}u^{-1}h^{-1},~
\alpha_1(u) =-\tfrac{\sqrt{3}}{2} hu\mathbf{k}u^{-1}h^{-1},
\end{align*}
satisfy
$X_1(\beta_2)=X_2(\alpha_3)=X_3(\alpha_1)=0$ and
\begin{align*}
&X_1(\alpha_1)=-2\tfrac{\sqrt{3}}{2} hu\mathbf{i}\mathbf{k}u^{-1}h^{-1}=2\tfrac{\sqrt{3}}{2} hu\mathbf{j}u^{-1}h^{-1}=2 \alpha_3,\\
&X_2(\alpha_1)=-2\tfrac{\sqrt{3}}{2} hu\mathbf{j}\mathbf{k}u^{-1}h^{-1}=-2\tfrac{\sqrt{3}}{2} hu\mathbf{i}u^{-1}h^{-1}=-2 \beta_2,\\
&X_3(\alpha_3)=2\tfrac{\sqrt{3}}{2} hu(-\mathbf{k})\mathbf{j}u^{-1}h^{-1}=2\tfrac{\sqrt{3}}{2} hu\mathbf{i}u^{-1}h^{-1}=2 \beta_2,\\
&X_3(\beta_2)=2\tfrac{\sqrt{3}}{2} hu(-\mathbf{k})\mathbf{i}u^{-1}h^{-1}=-2\tfrac{\sqrt{3}}{2} hu\mathbf{j}u^{-1}h^{-1}=-2 \alpha_3,\\
&X_1(\alpha_3)=2\tfrac{\sqrt{3}}{2} hu\mathbf{i}\mathbf{j}u^{-1}h^{-1}=2\tfrac{\sqrt{3}}{2} hu\mathbf{k}u^{-1}h^{-1}=-2 \alpha_1,\\
&X_2(\beta_2)=2\tfrac{\sqrt{3}}{2} hu\mathbf{j}\mathbf{i}u^{-1}h^{-1}=-2\tfrac{\sqrt{3}}{2} hu\mathbf{k}u^{-1}h^{-1}=2 \alpha_1.
\end{align*}
Next, if we take $p=h u \mathbf{i} u^{-1} h^{-1}$ and $q = hu\mathbf{j} u^{-1} h^{-1}$, we  have that
\begin{align*}
&D_{E_1} p= -\tfrac{\sqrt{3}}{4} D_{X_3} p=\tfrac{\sqrt{3}}{2}h u \mathbf{j} u^{-1} h^{-1} =h u \mathbf{i} u^{-1} h^{-1}(-\tfrac{\sqrt{3}}{2} hu\mathbf{k}u^{-1}h^{-1})=p \alpha_1,\\
&D_{E_2} p=  -\tfrac{\sqrt{3}}{4} D_{X_1} p =0 =p \alpha_2,\\
&D_{E_3} p=  -\tfrac{\sqrt{3}}{4}  D_{X_2} p = \tfrac{\sqrt{3}}{2}h u \mathbf{k} u^{-1} h^{-1} =h u \mathbf{i} u^{-1} h^{-1}(\tfrac{\sqrt{3}}{2} hu\mathbf{j}u^{-1}h^{-1})=p \alpha_3,\\
&D_{E_1} q= -\tfrac{\sqrt{3}}{4} D_{X_3} q= \tfrac{\sqrt{3}}{2}h u (-\mathbf{i}) u^{-1} h^{-1} =h u \mathbf{j} u^{-1} h^{-1}(-\tfrac{\sqrt{3}}{2} hu\mathbf{k}u^{-1}h^{-1})=q \beta_1, \\
&D_{E_2} q=  -\tfrac{\sqrt{3}}{4} D_{X_1} q =-\tfrac{\sqrt{3}}{2}h u \mathbf{k} u^{-1} h^{-1} =h u \mathbf{j} u^{-1} h^{-1}(\tfrac{\sqrt{3}}{2} hu\mathbf{i}u^{-1}h^{-1})=q \beta_2,\\
&D_{E_3} q=  -\tfrac{\sqrt{3}}{4} D_{X_2} q = 0=q \beta_3.
\end{align*}
After applying  an isometry of  the nearly K\"ahler~$\nks$, we  completes the proof of the theorem.
\end{proof}

\textbf{Case~1b:} $\mathbf{h_{12}^3=-\tfrac{1}{2}}$.  In this case, all connection coefficients are zero and  the submanifold $M$  is flat.
In that case we have
\begin{theorem}\label{thmeg18} Let $M$ be a Lagrangian submanifold of  the nearly K\"ahler~$\nks$. Assume that there exists a local orthonormal frame as in \textbf{Case~1b}.
Then $M$ is locally congruent with
the immersion $f: \mathbb R^3\to \nks: (u,v,w)\mapsto (p(u,w),q(u,v))$, where $p$ and $q$ are constant mean curvature tori in $\mathbb{S}^3$ given in Example \ref{ex:l8}.
\end{theorem}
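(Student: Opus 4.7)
The plan is to follow the strategy used in the proof of Theorem~\ref{thmeg16}, but now adapted to the flat case. In \textbf{Case 1b} every connection coefficient $\omega_{ij}^k$ vanishes, so $\{E_1,E_2,E_3\}$ is a commuting orthonormal frame with respect to $g$; hence $M$ is locally flat, and by Proposition~\ref{propberger} one can identify $(M,g)$ with an open subset of $(\R^3,g_{\mathrm{std}})$ equipped with coordinates $(u,v,w)$ such that $E_1=\d_u$, $E_2=\d_v$, $E_3=\d_w$.

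From the angle triple $(2\theta_1,2\theta_2,2\theta_3)=(0,\tfrac{2\pi}{3},\tfrac{4\pi}{3})$ one reads off the action of $P$ on the frame and, via \eqref{eq:Q}, also that of $Q$; the computation is identical to the one in Theorem~\ref{thmeg16} and gives
\[
QE_1=-\sqrt{3}\,JE_1,\qquad QE_2=E_2,\qquad QE_3=-E_3.
\]
Writing $f=(p,q)$ and $df(E_i)=(p\alpha_i,q\beta_i)$ with $\alpha_i,\beta_i\in\im\H$, and using $Q(U,V)=(-U,V)$, these identities force $\alpha_2=0$, $\beta_3=0$ and $\beta_1=\alpha_1$. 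The relation \eqref{eq:metric} together with the $g$-orthonormality of $\{E_1,E_2,E_3\}$ then yields $|\alpha_1|=|\beta_2|=|\alpha_3|=\tfrac{\sqrt{3}}{2}$, together with $\alpha_1\perp\beta_2$ and $\alpha_1\perp\alpha_3$. In particular $\d_v p=0$ and $\d_w q=0$, so automatically $p=p(u,w)$ and $q=q(u,v)$, which is exactly the structural shape of Example~\ref{ex:l8}.

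The next step is to derive the differential system for $(p,q,\alpha_1,\beta_2,\alpha_3)$. Since $\nabla_{E_i}E_j=0$ and the only non-vanishing component of the second fundamental form is $h_{12}^3=-\tfrac{1}{2}$, $\lcc_{E_i}E_j=h(E_i,E_j)$ is completely explicit, and Lemma~\ref{lem:connection} together with \eqref{eq:G} then gives $\ne_{E_j}E_i$ in closed form. Equating the latter with
\[
\ne_{E_j}E_i=\bigl(p(\alpha_j\times\alpha_i+E_j(\alpha_i)),\,q(\beta_j\times\beta_i+E_j(\beta_i))\bigr)
\]
produces a linear first-order system with constant coefficients in the partial derivatives $\d_u,\d_v,\d_w$ of the triple $\alpha_1,\beta_2,\alpha_3$, together with a cross-product identity of the form $\beta_2\times\alpha_1=\lambda\,\alpha_3$ that fixes the orientation of the triple in $\im\H$.

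Finally I would choose initial data so that $\alpha_1(0),\beta_2(0),\alpha_3(0)$ form $\tfrac{\sqrt{3}}{2}$ times a positively oriented orthonormal basis of $\im\H$, and then verify by direct substitution, as at the end of the proof of Theorem~\ref{thmeg16}, that the explicit parametrization given in Example~\ref{ex:l8} satisfies this system with those initial data. Uniqueness for the resulting linear system then identifies $f$ with Example~\ref{ex:l8} up to a left multiplication on each factor of $\nks$ and a rotation of $\im\H$, i.e.\ up to an isometry of $\nks$. The main obstacle is the bookkeeping in deriving the ODE system: organising the equations cleanly enough that the separation $p=p(u,w)$, $q=q(u,v)$ is manifest and that the trigonometric solutions of period $\tfrac{4\pi}{\sqrt{3}}$ appearing in Example~\ref{ex:l8} become recognisable; once this is done, the verification step itself is routine.
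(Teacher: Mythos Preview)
Your approach is valid and would succeed, but it diverges from the paper's at the integration stage. Both proofs begin identically: identify $E_1,E_2,E_3$ with coordinate fields $\d_u,\d_v,\d_w$, read off $Q$ on the frame, and deduce $\alpha_2=0$, $\beta_3=0$, $\beta_1=\alpha_1$ so that $p=p(u,w)$ and $q=q(u,v)$. From there you propose to mimic the proof of Theorem~\ref{thmeg16}: extract a first-order linear system for $(\alpha_1,\beta_2,\alpha_3)$ from the components of $\ne_{E_i}E_j$, normalise the initial triad by an isometry, and then \emph{verify} that the explicit maps of Example~\ref{ex:l8} satisfy this system, concluding by ODE uniqueness. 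The paper instead passes to second-order equations for $f$ itself (e.g.\ $p_{uu}=p_{ww}=-\tfrac34 p$, $p_{uw}=-\tfrac{\sqrt3}{2}\,pq^{-1}q_v$, and the analogous equations for $q$), solves the uncoupled ones by separation of variables, uses the $SO(4)$-action coming from the double cover $\nks\to SO(4)$ to put $p$ in standard form, and then determines the residual right-translation quaternion $d$ from the coupled mixed-derivative equations.

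Your route has the advantage of being methodologically uniform with Theorem~\ref{thmeg16} and of avoiding the explicit algebraic determination of $d$; the paper's route is genuinely constructive and does not presuppose the formula of Example~\ref{ex:l8}. Two small points: the reference to \eqref{eq:G} is not quite right, since that equation concerns the ambient frame $E_i,F_i$ on $\nks$; what you actually need is $JG(E_i,E_j)=\tfrac{1}{\sqrt3}\e_{ijk}E_k$ from \eqref{eqn:3.12}. Also, your first-order system is not constant-coefficient for $p$ and $q$ themselves (only for the $\alpha_i,\beta_j$, which turn out to depend on $u$ alone), so the uniqueness you invoke at the end is Frobenius-type uniqueness for a compatible linear system with variable coefficients, not the constant-coefficient ODE uniqueness used in Theorem~\ref{thmeg16}; this still works, but is worth stating precisely.
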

\begin{proof}
We know that all connection coefficients vanish
and that the only non vanishing component of the second fundamental form is $g(h(E_1,E_2), JE_3)= -\tfrac{1}{2}$. Moreover, from the beginning of the
 discussion about \textbf{Case 1}, we know that the angle functions are given by $(2 \theta_1,2 \theta_2,2 \theta_3)=(0,\tfrac{2\pi}{3}, \tfrac{4\pi}{3})$.
 So we can find a local basis such that $\sqrt{3} JG(E_1,E_2)=E_3$ and
\begin{align*}
PE_1=E_1,~
PE_2=-\tfrac 12 E_2 +\tfrac{\sqrt{3}}{2} JE_2,~
PE_3=-\tfrac 12 E_3 -\tfrac{\sqrt{3}}{2} JE_3.
\end{align*}
From this it follows that
\begin{align*}
QE_1=-\sqrt{3} JE_1,~
QE_2= E_2,~
QE_3= -E_3.
\end{align*}

As the connection coefficients vanish we may identify $E_1,E_2,E_3$ with coordinate vector fields. As before we write the $f= (p,q)$ and we denote the coordinates by $u,v,w$. Therefore, we have $E_1=f_u,E_2=f_v,E_3=f_w$. It immediately follows from the above expression of $Q$ that $p$  does not depend on $v$, $q$ does not depend on $w$ (i.e., $p_v=q_w=0$) and that $p^{-1} p_u= q^{-1} q_u$.

Moreover using the above expression for $P$ and the fact that  $JG(f_u,f_v)=\tfrac{\sqrt{3}}{3} f_w$, we have that $\ne_{f_u}f_u=\ne_{f_v}f_v=\ne_{f_w}f_w=0$ and
\begin{align*}
&\ne_{f_u}f_v=\ne_{f_v}f_u =-\tfrac 12 Jf_w -\tfrac{\sqrt{3}}{4}f_w -\tfrac 14 Jf_w=-\tfrac 34 Jf_w -\tfrac{\sqrt{3}}{4} f_w,\\
&\ne_{f_u} f_w=\ne_{f_w}f_u =-\tfrac 12 Jf_v
 +\tfrac{\sqrt{3}}{4}f_v -\tfrac 14 Jf_v=-\tfrac 34 Jf_v +\tfrac{\sqrt{3}}{4}
 f_v,\\
&\ne_{f_v}f_w =\ne_{f_w}{f_v} =-\tfrac 12 Jf_u +\tfrac 12 Jf_u=0.
\end{align*}

From the relation between the nearly K\"ahler metric and the  usual Euclidean product metric  (see \eqref{eq:metric}), we  have that  $f_u,f_v,f_w$ are also orthogonal with respect to the induced Euclidean product metric and that their lengths are given by
$$\langle  f_u,f_u\rangle   =\tfrac 32, \qquad  \langle  f_v,f_v\rangle   =\langle  f_w,f_w\rangle   =\tfrac 34.$$
Moreover from \eqref{eq:Q} and \eqref{eq:metric}, we have  $\langle  X,QY\rangle   =-\tfrac{\sqrt{3}}{2} g(X,JPY\rangle   $, hence
\begin{align*}
&\langle  f_u,Qf_v\rangle   =\langle  f_v,Qf_w\rangle   =\langle  f_u,Qf_w\rangle   =0,\\
&\langle  f_u,Qf_u\rangle   =0,~\langle  f_v,Qf_v\rangle   =\tfrac 34,~\langle  f_w,Qf_w\rangle   =-\tfrac 34.
\end{align*}

As
$$D_X Y = \ne_{X} Y -\tfrac 12 \langle  X,Y\rangle    f -\tfrac 12 \langle  X,QY\rangle    Qf,$$
where $D$ denotes the usual covariant derivative on $\mathbb H^2 = \mathbb R^8$, we deduce by combining the above equations that the immersion $f$ is determined by the following system of partial differential equations:
\begin{align*}
&f_{uu}=-\tfrac 34 f,~f_{vv}= -\tfrac 38 f - \tfrac 38 Qf,~f_{ww}= -\tfrac 38 f + \tfrac 38 Qf,\\
&f_{vw}=0,~f_{uv}=-\tfrac 34 Jf_w -\tfrac{\sqrt{3}}{4} f_w,~f_{uw}=-\tfrac 34 Jf_v +\tfrac{\sqrt{3}}{4} f_v.
\end{align*}

In terms of the components $p$ and $q$ this reduces to
\begin{equation}\label{puw}
p_{uu}=-\tfrac 34 p,\qquad \qquad  p_{ww}=-\tfrac 34 p, \qquad \qquad
p_{uw}=-\tfrac{\sqrt{3}}{2} pq^{-1} q_v.
\end{equation}
and
\begin{equation}\label{quv}
q_{uu}=-\tfrac 34 q, \qquad \qquad
q_{vv}=-\tfrac 34 q, \qquad \qquad
q_{uv}=\tfrac{\sqrt{3}}{2} qp^{-1} p_w.
\end{equation}
In order to simplify expressions, in the following we will write $\tilde u = \tfrac{\sqrt{3}}{2} u$,  $\tilde v = \tfrac{\sqrt{3}}{2} v$ and $\tilde w =\tfrac{\sqrt{3}}{2} w$.

We first look at the system of differential equations for $p$ (see \eqref{puw}). Solving the first two equations in \eqref{puw}, it follows that we can write
$$p=A_1 \cos(\tilde u)\cos (\tilde w) +A_2 \cos(\tilde u) \sin(\tilde w)+A_3 \sin(\tilde u)\cos(\tilde w) +A_4 \sin(\tilde u)\sin(\tilde w)),$$
where in order to simplify expressions we have written $\tilde u = \tfrac{\sqrt{3}}{2} u$,  $\tilde v = \tfrac{\sqrt{3}}{2} v$ and $\tilde w =\tfrac{\sqrt{3}}{2} w$.
Using now the fact that $\langle  p,p\rangle   =1$ together with $\langle  p_u,p_w\rangle   =0$ (as $f_u$ and $f_w$ are mutually orthogonal with respect to the induced Euclidean product metric) if follows that by applying an isometry of $SO(4)$ we may write $A_1=(1,0,0,0)$, $A_2=(0,1,0,0)$, $A_3=(0,0,1,0)$ and $A_4= (0,0,0,\epsilon_1)$, where $\epsilon_1= \pm 1$.

A similar argument is of course valid for the second map $q$.
Also it is well known that ~$\nks= SU(2)\times SU(2)$ is the double cover of~$SO(4)$, so any rotation~$R\in SO(4)$
can be written as~$R(x) = \alpha x \beta$, where~$\alpha, \beta \in \mathbb{S}^3$.
Therefore, applying an isometry of the nearly K\"ahler $\nks$, we can write that
\begin{align*}
&p=(\cos(\tilde u)\cos (\tilde w), \cos(\tilde u) \sin(\tilde w), \sin(\tilde u)\cos(\tilde w),\epsilon_1\sin(\tilde u)\sin(\tilde w)),\\
&q=(\cos(\tilde u)\cos (\tilde v), \cos(\tilde u) \sin(\tilde v), \sin(\tilde u)\cos(\tilde v),\epsilon_2\sin(\tilde u)\sin(\tilde w))d,
\end{align*}
where $\epsilon_i=\pm 1$ and $d=(d_1,d_2,d_3,d_4)$ is a unitary quaternion. Note that taking $d$ or $-d$ gives up to an isometry the same example.
Looking now at $p^{-1}p_{uw}+\tfrac{\sqrt{3}}{2} q^{-1} q_v=0$ (see \eqref{puw}) it immediately follows that $\epsilon_1=\epsilon_2=1$.

Moreover we get from $p_{uw}=-\tfrac{\sqrt{3}}{2} pq^{-1} q_v,~q_{uv}=\tfrac{\sqrt{3}}{2} qp^{-1} p_w$ (see \eqref{puw} and \eqref{quv})
 that the unit quaternion $d$ has to satisfy:
\begin{align*}
&d_1^2+d_2^2+d_3^2+d_4^2=1,\\
&d_1 d_2 +d_3 d_4=d_1 d_4-d_2 d_3=0,\\
&d_1^2+d_2^2-d_3^2-d_4^2=d_1^2-d_2^2-d_3^2+d_4^2=0,\\
&-1-2 d_1 d_3+2 d_2 d_4 =1+2 d_1 d_3+2 d_2 d_4=0.
\end{align*}
This reduces to
\begin{align*}
&d_3^2=d_1^2,~d_4^2=d_2^2,~d_1^2+d_2^2=\tfrac 12,~d_1 d_3=-\tfrac 12,~d_4 d_2=0.
\end{align*}
This system has  solutions $d=(\tfrac{1}{\sqrt{2}},0,-\tfrac{1}{\sqrt{2}},0)$ and
$d=(-\tfrac{1}{\sqrt{2}},0,\tfrac{1}{\sqrt{2}},0)$.
This completes the proof of the theorem.
\end{proof}

\textbf{Case~2:} $\mathbf{h_{12}^3=0.}$ In this case, recall that
$\bigl(\lambda_i(\lambda_j-\lambda_k)+\mu_i(\mu_j-\mu_k)\bigr)=2 \sin(\theta_j-\theta_k) \sin(2 \theta_i-\theta_j-\theta_k).$ Therefore, the general solution of~\eqref{eq:lm1} is
\begin{equation}\label{case2hijk}
\begin{aligned}
 h_{ii}^j &= -2\alpha_j \sin(\theta_j-\theta_k) \sin(2 \theta_i-\theta_j-\theta_k), \\
 h_{kk}^j &=\phantom{-} 2\alpha_j  \sin(\theta_i-\theta_j) \sin(2 \theta_k-\theta_i-\theta_j),
\end{aligned}
\end{equation}
where here and throughout the remainder of case, $(ijk)$ \textbf{denotes a positive permutations of} $(123)$ and $\alpha_1$,~$\alpha_2$ and~$\alpha_3$ are some real functions.
The components~$h_{ii}^i$ can be calculated using the minimality of~$M$.

As before we may assume that $M$ is not totally geodesic. Then from Lemma \ref{lem:equalangles} we may assume that all the angle functions are different (modulo~$\pi$). Hence, $\sin{(\theta_i-\theta_j)}\neq 0,~\forall~i\neq j$. We will show by contradiction that \textbf{Case 2} cannot occur.

By the second equation in~Lemma~\ref{lem:sff} and \eqref{case2hijk}, one then can express the~$\omega_{ij}^k$
in terms of the~$h_{ij}^k$ (since $\sin{(\theta_j-\theta_k)}\neq0,~\forall~j\neq k$). This gives us for  all positive permutations~$(ijk)$ of~$(123)$ that
\begin{align*}
&\omega_{ii}^j = 2 \alpha_j \cot(\theta_j-\theta_i) \sin(\theta_k-\theta_j) \sin(2 \theta_i-\theta_j-\theta_k),\\
&\omega_{kk}^j =2 \alpha_j \cot(\theta_j-\theta_k)\sin(\theta_i-\theta_j) \sin(2 \theta_k-\theta_i-\theta_j),\\
&\omega_{ij}^k = -\omega_{ik}^j = \tfrac{\sqrt{3}}{6}.
\end{align*}
Using Lemma \ref{lem:sff} and \eqref{case2hijk}, the differential equations for the angles become
\begin{equation}\label{dtheta}
\begin{aligned}
E_j(\theta_i)&= 2\alpha_j \sin(\theta_j-\theta_k) \sin(2 \theta_i-\theta_j-\theta_k),\\
E_j(\theta_k)&= -2\alpha_j  \sin(\theta_i-\theta_j) \sin(2 \theta_k-\theta_i-\theta_j),
\end{aligned}
\end{equation}
and
\begin{equation}\label{dtheta2}
\begin{aligned}
E_1(\theta_1)&=-\alpha_1 (\cos (2 (\theta_1-\theta_2))+\cos (2 (\theta_1-\theta_3))-2 \cos (2 (\theta_2-\theta_3))),\\
E_2(\theta_2)&=-\alpha_2 (\cos (2 (\theta_2-\theta_3))+\cos (2 (\theta_2-\theta_1))-2 \cos (2 (\theta_3-\theta_1))),\\
E_3(\theta_3)&=-\alpha_3 (\cos (2 (\theta_3-\theta_1))+\cos (2 (\theta_3-\theta_2))-2 \cos (2 (\theta_1-\theta_2))).
\end{aligned}
\end{equation}

First, we  deal with the case that $ \cos(\theta_i -\theta_j) \neq 0\neq\sin(2 \theta_i-\theta_j-\theta_k),~\forall ~i,j,k ~\text{distinct}$. In that case, we
find using the above expressions for~$\omega_{ij}^k$ together with  the Gauss equation \eqref{gauss} for  $R(E_i,E_j)E_k$ that
\begin{align*}
  & E_i(\alpha_j)= -\frac{1}{6}\csc(\theta_i-\theta_j)\sin(\theta_i-\theta_k)\csc(\theta_i+\theta_j-2\theta_k)\times\\
   &\phantom{\frac{1}{6}}\Bigl[
      6 \alpha_i \alpha_j \sin(\theta_i+\theta_j-2 \theta_k) (-7 \sin(\theta_j-\theta_k) +2 \sin(2 \theta_i -\theta_j-\theta_k) +\sin(2 \theta_i-3 \theta_j+\theta_k)) \\
      &\phantom{\frac{1}{6}} +2 \sqrt{3} \alpha_k \sin(\theta_i - 2 \theta_j + \theta_k)
     \Bigr]
\end{align*}
and
\begin{align*}
  & E_j(\alpha_i)= \frac{1}{6}\csc(\theta_i-\theta_j)\sin(\theta_j-\theta_k)\csc(\theta_i+\theta_j-2\theta_k)\times\\
   &\phantom{\frac{1}{6}}\Bigl[
      6 \alpha_i \alpha_j \sin(\theta_i+\theta_j-2 \theta_k) (-7 \sin(\theta_i-\theta_k) +2 \sin(2 \theta_j -\theta_i-\theta_k) +\sin(2 \theta_j-3 \theta_i+\theta_k)) \\
      &\phantom{\frac{1}{6}} + 2 \sqrt{3} \alpha_2 \sin(2 \theta_i -  \theta_j - \theta_k)
     \Bigr].
\end{align*}
Substituting these derivatives into the compatibility conditions,
$$E_i(E_j(\theta_i))-E_j(E_i(\theta_i))=(\nabla_{E_i}E_j-\nabla_{E_j}E_i)(\theta_i),$$
gives the following three equations relating the functions $\alpha_1$, $\alpha_2$ and $\alpha_3$:
\begin{equation}\label{alphai}
 \alpha_i \bigl(\sin(2 \theta_k -  \theta_i - \theta_j)\sin( \theta_k +  \theta_i - 2\theta_j)\bigr) \\
 = 8 \sqrt{3} \alpha_j \alpha_k \sin^3(\theta_k - \theta_j) \sin^2(\theta_k - 2 \theta_i + \theta_j),
\end{equation}
for every positive permutation~$(ijk)$ of~$(123)$. So we have equations of the form~$x_i \alpha_i = \alpha_j \alpha_k$,
where
\begin{equation*}
x_i = \frac{\sin(2 \theta_k -  \theta_i - \theta_j)\sin( \theta_k +  \theta_i - 2\theta_j)}{8 \sqrt{3}  \sin^3(\theta_k - \theta_j) \sin^2(\theta_k - 2 \theta_i + \theta_j)} \not = 0.
\end{equation*}
As the lagrangian submanifold $M$ is not totally geodesic, from  \eqref{case2hijk} we know that not all the $\alpha_i$ can vanish at the same time. Therefore it follows from the above system of equations \eqref{alphai} that
\begin{equation*}
\alpha_i^2 = x_j x_k= -\tfrac{\sin^2(2 \theta_i -  \theta_j - \theta_j)}{192 \sin ^3(\theta_i-\theta_j) \sin ^3(\theta_i-\theta _k) \sin
   (\theta_i+\theta_j-2\theta_k) \sin (\theta_i-2 \theta_j+\theta_k)}.
\end{equation*}
Using the Gauss equation \eqref{gauss} to calculate the sectional curvature~$K$
of the plane spanned by $E_i$ and $E_j$, we have
that~$K= \tfrac{5}{12}+ \tfrac{1}{3} \cos(2(\theta_i - \theta_j))$ for all~$i\neq j$.
As the sectional curvature is constant, this implies that
$$\cos(2(\theta_i - \theta_j))=\cos(2(\theta_k - \theta_j)),$$
which  means that all the angles are constant, hence by Lemma \ref{lem:sff} and the assumption of \textbf{Case 2} that $h_{12}^3=0$,  the submanifold $M$  is totally geodesic. So we get a contradiction with
the assumption that $M$ is not totally geodesic.

Next, we deal with the case that there exist some $i,j,k $ which are distinct such that $\sin(2\theta_i-\theta_j-\theta_k) = 0$.
As the sum of the angles is a multiple of $\pi$ and all the angles are determined up to a multiple of $\pi$, in this case it is sufficient to consider the case that $\theta_1= \frac{b \pi}{3}$ and $\theta_3 = a\pi-\theta_1-\theta_2=a\pi-\frac{b\pi}{3}-\theta_2$, where $a$ and $b$ are some constant integers and $\theta_2$ is not constant.
As $\theta_2+\theta_3=a\pi-\frac{b\pi}{3}=\text{constant}$, $\theta_2$ is not constant, using \eqref{dtheta},  we get from
$$0=E_1(\theta_2+\theta_3)=2\alpha_1[1-2 \cos ^2{(a\pi-\frac{b\pi  }{3}-2 \theta_2)}+(-1)^{a-b} \cos{(a\pi-\frac{b\pi}{3}-2\theta_2)}]$$
that $\alpha_1$ has to vanish.
Then from the Gauss equation \eqref{gauss} we obtain that the sectional curvature $K$  of the plane spanned by $E_1$ and $E_2$ is given by
$$K=\tfrac{5}{12} + \tfrac{1}{3} \cos(2 \theta_2-\frac{2b\pi}{3}).$$
As $M$ has constant sectional curvature this implies that $\theta_2$, and therefore all angle functions, are constant. Hence by Lemma \ref{lem:sff} and the assumption of \textbf{Case 2} that $h_{12}^3=0$,  the submanifold $M$  is totally geodesic. So we get a contradiction with
the assumption that $M$ is not totally geodesic.

Finally, we deal with the case that there exist some $i,j$  such that $\cos(\theta_i-\theta_j)=0$. As the sum of the angles is a multiple of $\pi$ and all the angles are only determined up to a multiple of $\pi$, in this case it is sufficient to consider the case that
\begin{align*}
\theta_2 = \theta_1 -\tfrac{b\pi}{2},~\theta_3=a\pi-\theta_1-\theta_2= a\pi+\tfrac{b\pi}{2}-2\theta_1,
\end{align*}
 where $a$ is a constant integer,  $b$ is an odd constant integer and $\theta_1$ is not constant.
 As $\theta_1-\theta_2=\frac{b\pi}{2}=\text{constant}$, $\theta_1$ is not constant, using \eqref{dtheta}-\eqref{dtheta2},  we get from
 \begin{equation*}
 \begin{aligned}
&0=E_1(\theta_1-\theta_2)=-\alpha_1\big(2\cos{(b\pi)}-3\cos{(6\theta_1)}+\cos{(b\pi-6\theta_1})\big),\\
&0=E_2(\theta_1-\theta_2)=\alpha_2\big(2\cos{(b\pi)}+\cos{(6\theta_1)}-3\cos{(b\pi-6\theta_1)}\big),\\
&0=E_3(\theta_1-\theta_2)=2\alpha_3\sin{(\frac{b\pi}{2})}\sin{(\frac{3b\pi}{2}-6\theta_1)},
\end{aligned}
\end{equation*}
that $\alpha_1=\alpha_2=\alpha_3=0$. Hence by Lemma \ref{lem:sff} and the assumption of \textbf{Case 2} that $h_{12}^3=0$,  the submanifold $M$  is totally geodesic. So we get a contradiction with
the assumption that $M$ is not totally geodesic.
\medskip

Therefore, we have proved that \textbf{Case 2} cannot occur.
\medskip

\noindent \textbf{Proof of Theorem \ref{main}~:}
Assume that $M$ is a Lagrangian submanifold of constant  sectional curvature in the nearly K\"ahler~$\nks$.
First, we consider the case that $M$ is totally geodesic, then applying Theorem \ref{thmtg} obtained by Zhang-Hu-Dioos-Vrancken-Wang, we get that
$M$ is locally congruent with  one of the following
immersions:

(1)  $f\colon \mathbb{S}^3 \to\nks: u\mapsto (u,1)$, which is Example \ref{ex:l1},

(2) $f\colon \mathbb{S}^3 \to\nks: u\mapsto (1,u)$, which is Example \ref{ex:l2},

(3) $f\colon \mathbb{S}^3\to\nks: u\mapsto (u,u)$, which is Example \ref{ex:l3}.

 Second, we consider the case that $M$ is not totally geodesic. They applying our discussions for Case 1a (see Theorem \ref{thmeg16}), Case 1b (see Theorem \ref{thmeg18}) and Case 2 (we have proved that this case cannot occur), we obtain that $M$ is locally congruent with  one of the following
immersions:

(4)  $f\colon \mathbb{S}^3\to \nks: u\mapsto (u\mathbf{i}u^{-1},u\mathbf{j}u^{-1})$, which is Example \ref{ex:l6},

(5) $f: \mathbb R^3\to \nks: (u,v,w)\mapsto (p(u,w),q(u,v))$, where $p$ and $q$ are constant mean curvature tori in $\mathbb{S}^3$ given in Example \ref{ex:l8}.

 This complete the proof of Theorem \ref{main}.
\qed

\end{document}